\documentclass[a4paper,12pt]{article}
\usepackage{amssymb,amsmath,amsthm,times,mathrsfs,fullpage}
\usepackage[pdftex]{hyperref}
\hypersetup{plainpages=True, pdfstartview=FitH, bookmarksopen=true,
colorlinks=true,linkcolor=blue,citecolor=blue}
\usepackage{tikz,pgf}
\usepackage{epstopdf}
\usepackage{todonotes}
\usepackage[nospace,noadjust]{cite}
\allowdisplaybreaks

\begin{document}

\newtheorem{thm}{Theorem}[section]
\newtheorem{cor}[thm]{Corollary}
\newtheorem{lmm}[thm]{Lemma}
\newtheorem{conj}[thm]{Conjecture}
\newtheorem{pro}[thm]{Proposition}
\theoremstyle{definition}\newtheorem{df}[thm]{Definition}
\theoremstyle{remark}\newtheorem{rem}[thm]{Remark}

\newcommand{\C}{\mathcal C}
\newcommand{\R}{\mathcal R}
\newcommand{\N}{\mathcal N}

\title{A~Cube-Root~Phase~Transition~in~Tree-Child~Networks \\ and the Enumeration Threshold for Galled Networks}
\author{Michael Fuchs \\ Department of Mathematical Sciences \\ National Chengchi University \\ Taipei 116 \\ Taiwan
\and
Hao Yu\\
Department of Mathematics \\
National University of Singapore \\
Singapore 119076 \\
Republic of Singapore}
\date{\today}
 
\maketitle

\begin{abstract}
We prove two surprising results about phylogenetic networks. First, we show that the structure of tree-child networks with $n$ leaves and $k$ reticulation nodes undergoes a sharp phase transition at $n^{1/3}$: if $k=o(n^{1/3})$, then a random tree-child network is almost surely a semi-simplex tree-child network, whereas if $k/n^{1/3}\rightarrow\infty$ and $k=o(n^{1/2})$, it is almost surely not. Second, we show that this result implies that the asymptotic counting formula for galled networks with $n$ leaves and a fixed number $k$ of reticulation nodes remains valid in the range $k=o(n^{1/3})$, but not beyond. This is in strong contrast to recently established results for the asymptotic counting formulas for tree-child and normal networks with $n$ leaves and $k$ reticulation nodes, which are valid in the (optimal) range $k=o(n^{1/2})$.
\end{abstract}

\section{Introduction and Results}

Over the last few decades, phylogenetic networks have increasingly replaced phylogenetic trees as the standard models for many evolutionary scenarios in biology; see \cite{HuRuSc,St}. Consequently, a rich mathematical theory has emerged, with many recent advances. Some of these advances concern the enumeration and stochastic behavior of various network classes; see, e.g., \cite{BoGaMa,LiLiu3Xi,McSeWe,PoBa,St}. In this paper, we add two new results in this area. The literature most closely related to our results is reviewed below. 

We start with the precise definition of a (rooted, binary) phylogenetic network, which is the following graph-theoretic structure.

\begin{df}[Phylogenetic Networks]
A (rooted, binary) {\it phylogenetic network} is a simple directed acyclic graph (DAG) whose nodes can be classified as follows.
\begin{itemize}
\item A unique {\it root} of indegree $0$ and outdegree $1$;
\item {\it Leaves}, which are bijectively labeled nodes of indegree $1$ and outdegree $0$;
\item {\it Tree nodes}, which are nodes of indegree $1$ and outdegree $2$;
\item {\it Reticulation nodes}, which are nodes of indegree $2$ and outdegree $1$.
\end{itemize}
\end{df}

Reticulation nodes (or \emph{reticulations} for short) are the nodes that distinguish general phylogenetic networks from phylogenetic trees, which form a subclass of the former. The \emph{size} of a network is its number of leaves. Moreover, if a network has $n$ leaves, then these are labeled by labels from the set $\{1,\ldots,n\}$.

As the space of phylogenetic networks is huge, many subclasses have been defined to make it more tractable; see \cite{KoPoLuWi}. In this paper, we consider two of them. The first is the prominent class of tree-child networks together with two of its subclasses; see \cite{CaRoVa} where it was introduced.

\begin{df}[Tree-Child Networks]
A phylogenetic network is called a {\it tree-child network} if every non-leaf node has at least one child that is a tree node or a leaf. In addition, we call a tree-child network {\it simplex} if the child of every reticulation node is a leaf, and {\it semi-simplex} if the child of every reticulation node is the root of a subtree.
\end{df}

Let ${\rm TC}_{n,k}$ denote the number of tree-child networks with $n$ leaves and $k$ reticulations. The following asymptotic counting result was recently proved in \cite{YuZh}: as $n\rightarrow\infty$,
\begin{equation}\label{asymp-k-small}
{\rm TC}_{n,k}\sim\frac{2^{k-1}\sqrt{2}}{k!}\left(\frac{2}{e}\right)^n n^{n+2k-1}
\end{equation}
uniformly in the optimal range $k=o(n^{1/2})$. This result improved earlier results from \cite{FuGiMa1,FuGiMa2,FuHuYu}, where the above asymptotics was proved only for fixed $k$. In addition, \cite{YuZh} showed that (\ref{asymp-k-small}) also holds for {\it normal networks} (introduced in \cite{Wi}), which are tree-child networks without {\it shortcuts}, i.e., edges $(u,v)$ such that $v$ can also be reached from $u$ via a path of length $\geq 2$.

Next, for simplex tree-child networks with $n$ leaves and $k$ reticulations, whose number we denote by ${\rm STC}_{n,k}$, the following closed-form expression is known:
\begin{equation}\label{formula-STC}
{\rm STC}_{n,k}=\binom{n}{k}\frac{(2n-2)!}{2^{n-1}(n-k-1)!},\qquad (0\leq k\leq n-1);
\end{equation}
see \cite{CaZh}. Thus, by Stirling's formula, as $n\rightarrow\infty$,
\[
{\rm STC}_{n,k}\sim\frac{\sqrt{2}}{2k!}\left(\frac{2}{e}\right)^nn^{n+2k-1}
\]
again throughout the optimal range $k=o(n^{1/2})$.

As for the number ${\rm SSTC}_{n,k}$ of semi-simplex tree-child networks with $n$ leaves and $k$ reticulations, it was proved in \cite{FuHuYu} that, for fixed $k$ and as $n\rightarrow\infty$, a tree-child network with $n$ leaves and $k$ reticulations chosen uniformly at random is almost surely semi-simplex, i.e.,
\begin{equation}\label{asymp-SSTC-k-fixed}
{\rm SSTC}_{n,k}\sim\frac{2^{k-1}\sqrt{2}}{k!}\left(\frac{2}{e}\right)^n n^{n+2k-1},
\end{equation}
for fixed $k$ as $n\rightarrow\infty$. In light of the above results, one would expect that the asymptotics in (\ref{asymp-SSTC-k-fixed}) remains true in the range $k=o(n^{1/2})$. Surprisingly, this turns out to be false.

\begin{pro}\label{result-SSTC}
As $n\rightarrow\infty$ and $k=o(n^{1/2})$,
\[
{\rm SSTC}_{n,k}\sim\frac{2^{k-1}\sqrt{2}}{k!}\left(\frac{2}{e}\right)^nn^{n+2k-1}e^{-\sqrt{2k^3/n}}.
\]
Consequently, (\ref{asymp-SSTC-k-fixed}) continues to hold in the optimal range $k=o(n^{1/3})$.
\end{pro}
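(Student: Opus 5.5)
The plan is to reduce semi-simplex networks to simplex ones, for which (\ref{formula-STC}) gives an exact count, and then carry out a saddle/singularity analysis of the resulting finite sum. Start from a semi-simplex tree-child network. Cut off the pendant subtree hanging below each of the $k$ reticulations and replace it by a single leaf. The result is a simplex tree-child network with $\ell+k$ leaves, where $\ell=n-m$ counts the leaves not lying below a reticulation and $m$ is the total number of leaves in the $k$ pendant subtrees. Conversely, each such simplex network, together with a choice of $k$ rooted binary trees on a partition of the $m$ remaining labels, gives back a semi-simplex network, and this is a bijection.

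Let $A_{N,k}={\rm STC}_{N,k}/\binom{N}{k}=\frac{(2N-2)!}{2^{N-1}(N-k-1)!}$ be the number of simplex networks in which the set of reticulation leaves is prescribed. Let $T(z)=1-\sqrt{1-2z}$ be the exponential generating function of rooted binary phylogenetic trees. Counting with labels gives the exact formula
\[
{\rm SSTC}_{n,k}=\frac{1}{k!}\sum_{m\ge k}\binom{n}{m}\,A_{n-m+k,k}\;m!\,[z^m]T(z)^k .
\]
The factor $1/k!$ compensates for the arbitrary labelling of the $k$ reticulation leaves. I will double-check this symmetry factor on small cases ($k=1$, small $n$).

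Next comes the asymptotic analysis. Normalising by the $m=k$ term, I will expand $\binom{n}{m}m!\,A_{n-m+k,k}/A_{n,k}$ with Stirling's formula. Writing $j=m-k$, the leading behaviour is $n^k(2n)^{-j}\cdot n^{j}$. The corrections are of the form $\exp\bigl(-c\,jk/n-O(j^2/n)-O(k^2/n)\bigr)$. They come from $n!/(n-m)!=n^m e^{-m^2/(2n)+\cdots}$, from the ratio of the factorials $(2N-2)!$, and above all from $(n-k-1)!/(n-m-1)!$. The $k^2/n$ corrections are $o(1)$ and combine with the $m=k$ term to reproduce (\ref{asymp-k-small}) up to the factor $2^{k}T(1/2)^k=2^k$. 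What remains is to evaluate
\[
S_{n,k}=\sum_{m}[z^m]T(z)^k\,2^{-m}e^{-\lambda (m-k)},\qquad \lambda\approx k/n ,
\]
that is, $T(\tfrac12 e^{-\lambda})^k$ up to the $2^k$ normalisation. Since $T(\tfrac12e^{-\lambda})=1-\sqrt{1-e^{-\lambda}}\approx 1-\sqrt{\lambda}$, this gives $\exp(-k\sqrt{\lambda})$. I expect the bookkeeping to yield precisely $\exp(-\sqrt{2k^3/n})$, i.e.\ $k\sqrt{\lambda}=\sqrt{2k^3/n}$. I still have to track the constant in $\lambda$ carefully: the contributions of $(2N-2)!$ and $(N-k-1)!$ both enter. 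The corollary for $k=o(n^{1/3})$ is then immediate, because the exponent tends to $0$.

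The main obstacle is justifying that the corrections are uniform over the relevant range of $m$. Under the weights $[z^m]T^k2^{-m}$, the sum $m-k$ behaves like a sum of $k$ i.i.d.\ heavy-tailed variables with tail $\sim j^{-3/2}$ (a stable-$1/2$ law). Hence $m$ is typically of order $k^2$, which can be comparable to $n$ when $k$ is near $n^{1/2}$. Consequently the $j^2/n$ term and the genuinely non-exponential parts of the ratio are not obviously negligible. I would first split the sum at $m\le \varepsilon n$. Beyond that point, crude bounds on $A_{N,k}$ and $[z^m]T^k\le 2^m$ should show that the tail is exponentially small. Below it, I would show that $j^2/n=O(k^4/n\cdot)$, which is only problematic on an event of small weight, and argue that $k=o(n^{1/2})$ makes $\sqrt{\lambda}\,k$ the only surviving correction. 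The delicate point is to confirm that the $e^{-j^2/(2n)}$ term contributes $o(1)$ relative to $e^{-\lambda j}$. This requires $j\ll k$ typically, which fails because $j\sim k^2$, so that term must be handled jointly. Concretely, I would compute the combined Laplace-type transform by a contour integral of $T(z)^k$ near $z=1/2$, uniformly in $k$.
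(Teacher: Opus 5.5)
Your combinatorial reduction is correct. The sum $\frac{1}{k!}\sum_m\binom{n}{m}A_{n-m+k,k}\,m![z^m]T(z)^k$ is exactly $n![z^n]F_k(z)T(z)^k/k!$ from Lemma~\ref{EGF-SSTC}. Normalising by the $m=k$ term, which equals ${\rm STC}_{n,k}$, is a neat way to get the constant $2^{k-1}\sqrt{2}/k!$ without the identity (\ref{pk1/2}).

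The gap is the asymptotic evaluation. That is the entire content of the proposition, you leave it open, and your diagnosis of the difficulty is wrong. If you combine the factorials rather than expanding them one at a time, the ratio of the $m$-th term to the $m=k$ term is exactly
\[
2^{k}\,[z^m]\tilde{T}(z)^k\prod_{i=n-m}^{n-k-1}\frac{i(i+1)}{(i+k)(i+k-\frac{1}{2})},\qquad \tilde{T}(z)=T(z/2)=1-\sqrt{1-z}.
\]
The product behaves like $(1-j/n)^{2k-3/2}$ with $j=m-k$. The $j^2/n$ contributions of $(n-k)!/(n-m)!$, $(n-k-1)!/(n-m-1)!$ and $(2n-2j-2)!/(2n-2)!$ cancel exactly, and the quadratic correction is only $O(kj^2/n^2)$. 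Also $\lambda=(2k-3/2)/n$, not $k/n$; with $\lambda=k/n$ you would get $e^{-\sqrt{k^3/n}}$.

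So there is no factor $e^{-j^2/(2n)}$ to ``handle jointly''. Under the tilted weights $j$ is typically of order $\min(k^2,\sqrt{kn})$. A genuine such factor would therefore not be negligible once $k\gg n^{1/4}$, and it would change the answer. The paper sees this cancellation through the closed form $F_k(z)=\frac{(2k)!}{2^k}(1-2z)^{-2k+1/2}p_k(z)$ of Lemma~\ref{exp-Fkz}.

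What is still missing after that is a uniform error bound, which is exactly Lemma~\ref{tech-lmm}. You keep the exponential factor $\rho^j$ exactly (the paper takes $\rho=e^{-\alpha/n}$, $\alpha=2k-1/2$). You then show the remaining relative error is $O(\alpha j^2/n^2+\alpha^2/n+j/n)$ uniformly for all $0\le j\le n$. The range $j>n/2$ needs a separate product argument, and the $j/n$ term is what absorbs it.

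The error sums are then controlled by the first and second moments of the tilted weights $[z^j]\tilde{T}(z)^k\rho^j$. These are explicit derivatives of $\tilde{T}(\rho z)^k$ at $z=1$, and they are $O\bigl(e^{-k\sqrt{\alpha/n}}\sqrt{kn}\bigr)$ and $O\bigl(e^{-k\sqrt{\alpha/n}}(kn+n^{3/2}k^{-1/2})\bigr)$. This gives relative error $O(\sqrt{k/n}+k^2/n)$, with no contour integration needed. Your heuristic ``$j\sim k^2$'' describes the untilted weights and is not the relevant scale.

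Your tail argument also fails as stated. For $m>\varepsilon n$, the bound $[z^m]T(z)^k\le 2^m$ only gives something of order $n$ times the main term when $k$ is bounded. This is because it throws away the $m^{-3/2}$ decay of $[z^m]\tilde{T}(z)^k$. For fixed $k$ the true tail has relative size of order $n^{-1/2}$, not exponentially small, and the first-moment bound above is exactly what captures it.
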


From this result, together with (\ref{asymp-k-small}) from \cite{YuZh}, we obtain the following sharp structural phase transition result for tree-child networks, which is our first main result. To the best of our knowledge, this is the first such result in combinatorial phylogenetics, whereas phase transition results are a central theme in probabilistic combinatorics; see, e.g., \cite{Bo,ErRe,JaLuRu}.

\begin{thm}
Let $E_{n,k}$ denote the event that a randomly chosen tree-child network with $n$ leaves and $k$ reticulations is semi-simplex. Then, as $n\rightarrow\infty$ and $k=o(n^{1/2})$,
\[
{\mathbb P}(E_{n,k})\rightarrow\begin{cases} 1,&\ \text{if}\ k=o(n^{1/3}); \\ e^{-\sqrt{2}c^{3/2}},&\ \text{if}\ k\sim cn^{1/3}; \\ 0, &\ \text{if}\ k/n^{1/3}\rightarrow\infty.\end{cases}
\]
\end{thm}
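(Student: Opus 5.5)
The theorem follows by dividing the asymptotic formula of Proposition~\ref{result-SSTC} by the counting result (\ref{asymp-k-small}) from \cite{YuZh}. Under the uniform model, $E_{n,k}$ has probability
\[
{\mathbb P}(E_{n,k})=\frac{{\rm SSTC}_{n,k}}{{\rm TC}_{n,k}},
\]
since every semi-simplex tree-child network is in particular a tree-child network. Both asymptotic formulas hold uniformly in the range $k=o(n^{1/2})$, and their main terms $\frac{2^{k-1}\sqrt{2}}{k!}(2/e)^n n^{n+2k-1}$ coincide. Taking the quotient therefore gives
\[
{\mathbb P}(E_{n,k})=(1+o(1))\,e^{-\sqrt{2k^3/n}},\qquad n\rightarrow\infty,\ k=o(n^{1/2}).
\]
Because the error factor $1+o(1)$ tends to $1$ along any sequence $k=k(n)=o(n^{1/2})$, it only remains to analyse $e^{-\sqrt{2k^3/n}}$, which depends on $k$ and $n$ only through $k^3/n=(k/n^{1/3})^3$.

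We split into the three cases.
\begin{itemize}
\item If $k=o(n^{1/3})$, then $k^3/n\to0$, so the exponent tends to $0$ and ${\mathbb P}(E_{n,k})\to1$.
\item If $k\sim cn^{1/3}$ with $c>0$, then $k^3/n\to c^3$, so $\sqrt{2k^3/n}\to\sqrt{2}\,c^{3/2}$. By continuity of the exponential, the limit is $e^{-\sqrt{2}c^{3/2}}$.
\item If $k/n^{1/3}\to\infty$ while $k=o(n^{1/2})$, then $k^3/n\to\infty$, so $e^{-\sqrt{2k^3/n}}\to0$. Multiplying by the bounded factor $1+o(1)$ still gives limit $0$.
\end{itemize}

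The only delicate point is that the quotient of two asymptotic equivalences is legitimate here. Both relations hold uniformly over $k=o(n^{1/2})$, so along any admissible sequence $k(n)$ each count equals its main term times $1+o(1)$, and the ratio of two such factors is again $1+o(1)$. Nothing needs care at the boundary $k$ near $n^{1/2}$, because the whole range is assumed to be $o(n^{1/2})$. All the substantive work therefore lies in Proposition~\ref{result-SSTC}, which we take as given.
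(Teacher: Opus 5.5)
Your proposal is correct and follows the paper's own argument: the paper obtains the theorem by dividing the uniform asymptotic for ${\rm SSTC}_{n,k}$ in Proposition~\ref{result-SSTC} by the uniform asymptotic (\ref{asymp-k-small}) for ${\rm TC}_{n,k}$. This gives ${\mathbb P}(E_{n,k})=(1+o(1))e^{-\sqrt{2k^3/n}}$ throughout $k=o(n^{1/2})$, and you then read off the three regimes of $k^3/n$ exactly as the paper does.
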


Another consequence of Proposition~\ref{result-SSTC} concerns the counting of galled networks, another important network class in phylogenetics; see \cite{HuKl}.

\begin{df}
A phylogenetic network is called a {\it galled network} if every reticulation is in a {\it tree cycle}, i.e., a union of two edge-disjoint paths from a common tree node to a common reticulation with all other nodes being tree nodes.
\end{df}

Denote by ${\rm GN}_{n,k}$ the number of galled networks with $n$ leaves and $k$ reticulations. It was proved in \cite{ChFu} that for $k$ fixed and as $n\rightarrow\infty$,
\begin{equation}\label{asymp-GN-k-small}
{\rm GN}_{n,k}\sim {\rm TC}_{n,k}\sim\frac{2^{k-1}\sqrt{2}}{k!}\left(\frac{2}{e}\right)^n n^{n+2k-1};
\end{equation}
see also \cite{ChFuYu}. One might again guess that this extends to the range $k=o(n^{1/2})$, however, this again turns out to be wrong.

\begin{thm}\label{result-GN}
As $n\rightarrow\infty$ and $k=o(n^{1/2})$,
\[
{\rm GN}_{n,k}=\frac{2^{k-1}\sqrt{2}}{k!}\left(\frac{2}{e}\right)^nn^{n+2k-1}\left(e^{-\sqrt{2k^3/n}}+o(1)\right).
\]
\end{thm}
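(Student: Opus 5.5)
The plan is to squeeze ${\rm GN}_{n,k}$ between two quantities that both equal $\frac{2^{k-1}\sqrt{2}}{k!}(2/e)^n n^{n+2k-1}(e^{-\sqrt{2k^3/n}}+o(1))$. Since the formula carries an additive $o(1)$, only first-order information is needed. For $k=o(n^{1/3})$ the exponential factor is $1+o(1)$, and for larger $k$ it tends to $0$.

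\textbf{Lower bound.} First I will show that every semi-simplex tree-child network is galled. Let $r$ be a reticulation of a tree-child network whose child roots a subtree. In a tree-child network each parent of $r$ is a tree node. I take a lowest common ancestor $u$ of the two parents, chosen so that the two paths from $u$ to $r$ are edge-disjoint. I then need all interior nodes of these paths to be tree nodes, i.e. the paths avoid other reticulations. I am unsure this holds automatically. If it does not, I will instead use the known decomposition from \cite{ChFu} of galled networks into a tree component with attached galls, and exhibit an injection from semi-simplex-type structures into galled networks that loses only a $o(1)$ fraction.

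Granting this step, ${\rm GN}_{n,k}\ge {\rm SSTC}_{n,k}-(\text{negligible})$, and Proposition~\ref{result-SSTC} then gives the lower bound.

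\textbf{Upper bound.} Here I will split galled networks into three classes.
\begin{itemize}
\item Galled networks that are tree-child and semi-simplex: at most ${\rm SSTC}_{n,k}$.
\item Galled networks that are tree-child but not semi-simplex: at most ${\rm TC}_{n,k}-{\rm SSTC}_{n,k}$.
\item Galled networks that are not tree-child.
\end{itemize}
The second class is a problem, because for $k\gg n^{1/3}$ it carries most of the tree-child mass. The inclusion ${\rm GN}\subseteq {\rm TC}$ is therefore not enough, and I must show that the galled tree-child networks which are not semi-simplex are $o$ of the main term.

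The key structural fact I expect is the following. In a galled tree-child network, if the child of a reticulation $r$ has a reticulation $r'$ below it, then the gall of $r'$ lies inside the part hanging below $r$. This nesting forces a bounded extra ``cost''. I will reproduce the component-graph counting from \cite{YuZh} or \cite{FuHuYu}. There, a tree-child network is built from a tree by inserting $k$ reticulation edges, and the non-semi-simplex configurations arise when one insertion point lies below another. In those counts the semi-simplex defect contributes a factor like $1-e^{-\sqrt{2k^3/n}}$, coming from roughly $k^2$ pairs each nested with probability $\asymp\sqrt{k/n}$. For galled networks, a nested configuration additionally requires both endpoints of the inner reticulation edge to lie in a small gall region. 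That costs a factor $O(n^{-1/2})$ or smaller per nesting, so the total over $\le k^2$ pairs is $O(k^2\sqrt{k/n}\cdot n^{-1/2})=o(1)$ relative to the main term once $k=o(n^{1/2})$.

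The non-tree-child galled networks are handled by the same estimate used in \cite{ChFu} for fixed $k$, made uniform in $k=o(n^{1/2})$. The total count ${\rm GN}_{n,k}$ is known to be bounded by ${\rm TC}_{n,k}(1+o(1))$ only for fixed $k$, so I will redo that bound with explicit $k$-dependence.

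\textbf{Main obstacle.} The hardest step is the uniform estimate that galled tree-child networks with nested reticulations, together with non-tree-child galled networks, contribute $o\bigl(\frac{2^{k}}{k!}(2/e)^n n^{n+2k-1}\bigr)$ uniformly for $k=o(n^{1/2})$. I would attack it first via an exponential generating function for galls with a marked nesting, followed by a saddle-point bound.
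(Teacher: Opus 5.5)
Your lower bound is sound, and the step you were unsure about does hold. In a semi-simplex tree-child network no reticulation is an ancestor of another. So all proper ancestors of the two parents $p_1,p_2$ of a reticulation $r$ are tree nodes (or the root), each with a unique parent. The root-to-$p_1$ and root-to-$p_2$ paths therefore share a prefix ending at some node $u$ and are disjoint afterwards, which gives a tree cycle through $r$. Hence ${\rm GN}_{n,k}\geq{\rm SSTC}_{n,k}$ exactly, and no fallback is needed.

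\textbf{The gap: class (b).} What this class requires is ${\rm GTC}_{n,k}-{\rm SSTC}_{n,k}=o\bigl(\frac{2^k}{k!}(2/e)^n n^{n+2k-1}\bigr)$ uniformly for $k=o(n^{1/2})$. You offer only a heuristic, and it does not deliver this:
\begin{itemize}
\item As written, $k^2\sqrt{k/n}\cdot n^{-1/2}=k^{5/2}/n$, which tends to infinity for $n^{2/5}\ll k=o(n^{1/2})$.
\item Having $k^2$ pairs each nested with probability $\asymp\sqrt{k/n}$ would give about $k^{5/2}/\sqrt{n}$ nestings on average. That is inconsistent with the factor $e^{-\sqrt{2k^3/n}}$ you are trying to explain.
\end{itemize}

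\textbf{The missing idea.} Galled tree-child networks have an exact recursive description (\cite{ChFuYu}): $G(z,v)=\sum_{j\geq0}F_j(z)(vG(z,v))^j/j!$. In words, a simplex tree-child network has galled tree-child networks hung below its reticulations. Lagrange inversion turns this into an explicit sum over $s$:
\begin{itemize}
\item the $s=1$ term is exactly the semi-simplex generating function $F_k(z)T(z)^k/k!$;
\item the $s$-th term has singularity $(1-2z)^{-2k+s/2}$, which costs a factor of order $(k/n)^{(s-1)/2}$ after the Gamma ratio.
\end{itemize}
The remaining difficulty is controlling the combinatorial weights uniformly in $k$. The paper does this with the convolution bound of Lemma~\ref{final-lmm} and the uniform transfer estimate of Lemma~\ref{tech-lmm} and Corollary~\ref{tech-cor}. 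This yields ${\rm GTC}_{n,k}={\rm SSTC}_{n,k}(1+{\mathcal O}(\sqrt{k/n}))$. Your proposed marked-nesting generating function with a saddle-point bound is not developed far enough to replace this.

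\textbf{Class (c).} You also leave this open, and redoing \cite{ChFu} uniformly in $k$ would be substantial work. But it needs nothing new. A galled network that is not tree-child is a phylogenetic network that is not tree-child. So ${\rm GN}_{n,k}-{\rm GTC}_{n,k}\leq{\rm PN}_{n,k}-{\rm TC}_{n,k}$, where ${\rm PN}_{n,k}$ counts all phylogenetic networks. The paper of \cite{YuZh} proves that ${\rm PN}_{n,k}$ and ${\rm TC}_{n,k}$ have the same asymptotic formula uniformly for $k=o(n^{1/2})$. This difference is therefore $o\bigl(\frac{2^k}{k!}(2/e)^n n^{n+2k-1}\bigr)$, which is exactly the error the additive $o(1)$ in the statement allows. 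Combined with the ${\rm GTC}_{n,k}$ estimate, this finishes the proof.
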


Thus, the asymptotic result (\ref{asymp-GN-k-small}) remains valid precisely in the range $k=o(n^{1/3})$. This resolves the open problem posed in the conclusion of \cite{YuZh}. Also, beyond that range (but still within the range $k=o(n^{1/2})$), the ratio of galled networks to tree-child networks tends to $0$. This is consistent with the fact that, for a fixed number $n$ of leaves, tree-child networks are asymptotically much more numerous than galled networks; see the main results~of~\cite{FuYuZh1}~and~\cite{FuYuZh2}.

We conclude the introduction with a brief outline of the paper. In the next section, we prove Proposition~\ref{result-SSTC}. The proof of Theorem~\ref{result-GN} is given in Section~\ref{gn}. We end the paper with some final remarks in Section~\ref{con}.

\section{Semi-simplex Tree-Child Networks}

In this section, we prove Proposition~\ref{result-SSTC}. We start with the following exponential generating function, which is motivated by (\ref{formula-STC}):
\[
F_k(z):=\sum_{\ell\geq 1}\frac{(2\ell+2k-2)!}{2^{\ell+k-1}(\ell-1)!}\cdot\frac{z^{\ell}}{\ell!}.
\]
Note that this is the exponential generating function of the number of simplex tree-child networks with $\ell+k$ leaves and $k$ reticulations, where the leaves below the reticulations receive labels from the set $\{1,\ldots,k\}$. Thus, when $k=0$, $F_0(z)$ counts phylogenetic trees, whose exponential generating function is given by
\[
T(z):=F_0(z)=1-\sqrt{1-2z};
\]
see, e.g., page 129 in \cite{FlSe}.

The exponential generating function for the number of semi-simplex tree-child networks is derived from $F_k(z)$ as follows.

\begin{lmm}\label{EGF-SSTC}
The exponential generating function of the number of semi-simplex tree-child networks with $n$ leaves and $k$ reticulations is given by
\[
F_k(z)\frac{T(z)^k}{k!}=F_k(z)\frac{(1-\sqrt{1-2z})^k}{k!}.
\]
\end{lmm}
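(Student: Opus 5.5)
We prove Lemma~\ref{EGF-SSTC} with a bijective decomposition: a semi-simplex tree-child network is obtained from a simplex tree-child network by replacing each leaf below a reticulation with a phylogenetic tree. Take a semi-simplex tree-child network $N$ with $n$ leaves and $k$ reticulations $r_1,\ldots,r_k$. By definition, the child of each $r_i$ is the root of a subtree $S_i$. Such a subtree hangs below a single reticulation, contains no reticulations, and is therefore a (possibly one-leaf) phylogenetic tree. Contract each $S_i$ to a single new leaf $\lambda_i$. The result $N'$ is a simplex tree-child network with $n-\sum_i|S_i|+k$ leaves and $k$ reticulations. We check this directly. Contraction keeps the DAG property and all indegrees and outdegrees outside the $S_i$. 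The tree-child condition survives because every node outside the $S_i$ keeps the same set of children, except that a child in $S_i$ is replaced by a leaf, which is still a leaf since $S_i$ sits directly below $r_i$. Simplicity also survives: two parallel edges into $\lambda_i$ would have to come from parallel edges into $r_i$.

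Next we fix the labelling so that the data is an ordered decomposition. Order the reticulations canonically, for instance by the minimal leaf label in the subtree $S_i$ below them. Give $\lambda_i$ label $i\in\{1,\ldots,k\}$. Relabel the remaining $\ell$ leaves of $N'$ by $k+1,\ldots,k+\ell$ in order-preserving fashion, keeping track of which subset of $\{1,\ldots,n\}$ they came from.

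Conversely, suppose we are given the following data:
\begin{itemize}
\item a simplex tree-child network with $\ell+k$ leaves whose reticulation-leaves carry labels $1,\ldots,k$;
\item $k$ phylogenetic trees $S_1,\ldots,S_k$;
\item a distribution of the labels $\{1,\ldots,n\}$ among the $\ell$ free leaves and the $k$ trees.
\end{itemize}
Grafting $S_i$ at leaf $i$ gives a semi-simplex tree-child network. Grafting preserves the tree-child property because the roots of the $S_i$ are tree nodes or leaves. Forgetting the canonical order, the $k$ trees form an unordered set of $k$ components. This produces the factor $T(z)^k/k!$.

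In EGF terms, the external leaves of the simplex network are counted by $F_k(z)$. Here the $k$ reticulation leaves are unlabelled by $z$ and carry the fixed labels $1,\ldots,k$, which is exactly the normalization of $F_k$ described after its definition. Labelled product then gives $F_k(z)\,T(z)^k/k!$. The $1/k!$ compensates for the $k!$ ways of assigning the labels $1,\ldots,k$ to the reticulation leaves, which correspond to the same network. To make this airtight, we argue that each semi-simplex network arises from exactly $k!$ tuples (simplex network with labelled reticulation leaves, ordered trees, label distribution), one for each bijection between reticulations and $\{1,\ldots,k\}$. Multiplying the EGFs of ordered tuples gives $F_k T^k$, and dividing by $k!$ gives the claim.

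The main obstacle is checking that this $k!$-to-one correspondence is exact. We must rule out automorphisms that could make the count of relabellings smaller. Since all original leaves are labelled and each $S_i$ contains at least one labelled leaf, the reticulations are distinguishable and no nontrivial symmetry exists. We must also confirm that $F_k$ indeed counts simplex networks with the reticulation leaves labelled exactly by $\{1,\ldots,k\}$, with the remaining $\ell$ leaves marked by $z^\ell/\ell!$. This follows from (\ref{formula-STC}): $\mathrm{STC}_{\ell+k,k}/\binom{\ell+k}{k}$ is the count with a prescribed set of $k$ labels on the reticulation leaves, and this equals the stated coefficient $(2\ell+2k-2)!/(2^{\ell+k-1}(\ell-1)!)$.
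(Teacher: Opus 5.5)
Your proposal is correct and follows essentially the same route as the paper: you contract each subtree below a reticulation to a leaf, label these leaves $1,\ldots,k$ in the order of the smallest leaf label of the subtrees, and read the subtrees as an unordered forest, which gives the factor $T(z)^k/k!$. Your explicit $k!$-to-one count, and your symmetry check via (\ref{formula-STC}) that $F_k$ has the stated normalization, make rigorous two points the paper simply asserts.
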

\begin{proof}
By definition, a semi-simplex tree-child network with $n$ leaves and $k$ reticulations is obtained from a simplex tree-child network with $\ell+k$ leaves and $k$ reticulations, where the leaves below the reticulations are labeled by $1,\ldots,k$, by replacing the leaves below the reticulations with an unordered forest of phylogenetic trees whose leaf labels consist of the set $\{1,\ldots,n-\ell\}$. The replacement is performed as follows. The leaf with label $1$ is replaced by the phylogenetic tree in the forest with the smallest label, the leaf with label $2$ is replaced by the phylogenetic tree with the second smallest label, and so on. Finally, the leaves are (suitably) relabeled so that their labels are from the set $\{1,\ldots,n\}$. Using exponential generating functions, this construction translates directly into the claimed expression.
\end{proof}

We next need a result for $F_k(z)$.
\begin{lmm}\label{exp-Fkz}
For $k\geq 1$,
\[
F_k(z)=\frac{(2k)!}{2^k}(1-2z)^{-2k+1/2}p_k(z),
\]
where $p_k(z)$ is the polynomial of degree $k$ given by
\begin{equation}\label{pkz}
p_k(z):=\sum_{\ell=1}^{k}\frac{1}{2^{\ell-1}\ell}\binom{2\ell-2}{\ell-1}\binom{2k-2}{2\ell-2}z^{\ell}.
\end{equation}
\end{lmm}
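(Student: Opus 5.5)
The plan is to rewrite $F_k(z)$ as an explicit differential expression in $T(z)$. That makes the factor $(1-2z)^{-2k+1/2}$ and the polynomial character of the remaining factor automatic; only the identification of the coefficients with those of $p_k(z)$ in (\ref{pkz}) is then left.

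\emph{Step 1: the coefficients of $F_k$ in terms of $T$.} Write $t_m=(2m-2)!/(2^{m-1}(m-1)!)=(2m-3)!!$ for the coefficient of $z^m/m!$ in $T(z)$. The coefficient of $z^\ell/\ell!$ in $F_k(z)$ is then
\[
a_\ell=\frac{(2\ell+2k-2)!}{2^{\ell+k-1}(\ell-1)!}=t_{\ell+k}\,\frac{(\ell+k-1)!}{(\ell-1)!}.
\]
Since $D^k z^{\ell+k-1}=\frac{(\ell+k-1)!}{(\ell-1)!}z^{\ell-1}$ with $D=d/dz$, and $\sum_{\ell\ge0}t_{\ell+k}z^\ell/\ell!=T^{(k)}(z)$, I would verify termwise that
\[
F_k(z)=z\,D^k\bigl(z^{k-1}T^{(k)}(z)\bigr).
\]
The term $\ell=0$ contributes nothing, because $D^k z^{k-1}=0$.

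\emph{Step 2: the closed form.} For $k\ge1$ we have $T^{(k)}(z)=(2k-3)!!\,(1-2z)^{1/2-k}$, with $(-1)!!=1$. Applying Leibniz' rule to $D^k\bigl(z^{k-1}(1-2z)^{1/2-k}\bigr)$, the $j$-th term (with $j$ derivatives on $z^{k-1}$, $0\le j\le k-1$) is a constant times $z^{k-1-j}(1-2z)^{1/2-2k+j}$. Pulling out $(1-2z)^{1/2-2k}$ leaves the finite sum $\sum_j c_j z^{k-1-j}(1-2z)^j$, a polynomial of degree at most $k-1$. Multiplying by $z$ gives
\[
F_k(z)=(1-2z)^{-2k+1/2}\,q_k(z),
\]
where $q_k$ is a polynomial of degree at most $k$ with $q_k(0)=0$. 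This already has the shape claimed in the lemma.

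\emph{Step 3: identifying $q_k$.} It remains to prove $q_k=\frac{(2k)!}{2^k}p_k$, i.e.\ to match the coefficient of $z^\ell$ for $1\le\ell\le k$. There are two routes.
\begin{itemize}
\item Expand each $(1-2z)^j$ in Step 2 by the binomial theorem and collect powers of $z$. This gives an alternating single sum for $[z^\ell]q_k$, which should collapse to $\frac{(2k)!}{2^k}\frac{1}{2^{\ell-1}\ell}\binom{2\ell-2}{\ell-1}\binom{2k-2}{2\ell-2}$ by a Chu--Vandermonde type evaluation.
\item Compute $[z^\ell]q_k$ as the finite convolution of the known $a_m/m!$ with the binomial coefficients of $(1-2z)^{2k-1/2}$, namely $\binom{2k-1/2}{i}(-2)^i$, and evaluate that sum.
\end{itemize}

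This hypergeometric summation is the step I expect to be the main obstacle. I would first try to write the sum as a terminating ${}_2F_1$ at argument $1$ and apply Chu--Vandermonde; half-integer parameters appear naturally from $\binom{2\ell-2}{\ell-1}$. If that does not close cleanly, I would fall back on the Zeilberger/WZ method to certify the identity. As a sanity check, for $k=1$ both sides give $F_1(z)=z(1-2z)^{-3/2}$, consistent with $p_1(z)=z$ and $(2k)!/2^k=1$.
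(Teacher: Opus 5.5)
Your proposal is correct, and it takes a different route from the paper. The paper writes $F_k(z)=\frac{(2k)!}{2^k}\,z\,{}_2F_1(k+\tfrac12,k+1;2;2z)$ and quotes Euler's transformation ${}_2F_1(a,b;c;x)=(1-x)^{c-a-b}{}_2F_1(c-a,c-b;c;x)$. Since $c-a=1-k$ is a non-positive integer, the second factor terminates, and the coefficients of $p_k$ come out as single Pochhammer quotients, so no summation is needed.

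Your identity $F_k=zD^k\bigl(z^{k-1}T^{(k)}\bigr)$ checks out termwise. It gives the factor $(1-2z)^{-2k+1/2}$ and the polynomiality of the cofactor for free, with no special-function identity. The price is that identifying the coefficients becomes a genuine summation: you are in effect re-proving this instance of Euler's transformation.

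The step you left open does close as you predicted, via your first route. Leibniz' rule gives
\[
q_k(z)=\sum_{i=1}^{k}\binom{k}{i}\frac{(k-1)!}{(i-1)!}\,(2k+2i-3)!!\;z^i(1-2z)^{k-i}.
\]
After using $\binom{k}{i}\binom{k-i}{\ell-i}=\binom{k}{\ell}\binom{\ell}{i}$, one gets $[z^\ell]q_k=\binom{k}{\ell}\,\ell\,(k-1)!\,(2k-1)!!\,(-2)^{\ell-1}\,{}_2F_1(1-\ell,k+\tfrac12;2;1)$. Chu--Vandermonde gives ${}_2F_1(1-\ell,k+\tfrac12;2;1)=(\tfrac32-k)_{\ell-1}/\ell!$. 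Hence $[z^\ell]q_k=\binom{k}{\ell}\frac{(k-1)!}{(\ell-1)!}(2k-1)!!\,(2k-3)(2k-5)\cdots(2k-2\ell+1)$, where the product is empty for $\ell=1$. This equals $\frac{(2k)!}{2^k}[z^\ell]p_k(z)$, as required.

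Your second route would lead to a balanced ${}_3F_2$ that needs Pfaff--Saalsch\"utz rather than Vandermonde, so the first route is the better choice. Note also that the lemma's claim of degree exactly $k$ only follows after this identification, from $[z^k]p_k\neq 0$.
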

\begin{proof}
Using the definition of the Gauss hypergeometric function, 
\begin{equation}\label{Fk-hyper}
F_k(z)=\frac{(2k)!}{2^k}z{}_2F_1(k+1/2,k+1,2;2z).
\end{equation}
Now recall that
\[
_{2}F_1(k+1/2,k+1,2;x)=(1-x)^{-2k+1/2}{}_{2}F_1(1-k,3/2-k,2;x).
\]
Plugging this into (\ref{Fk-hyper}) gives the claimed expression with
\[
p_k(z):=z{}_2F_1(1-k,3/2-k,2;2z), 
\]
where (\ref{pkz}) is obtained by straightforward simplification of the coefficient of the Gauss hypergeometric function with the above parameters. (The finite degree of the series comes from the fact that the first parameter is an integer.)
\end{proof}
\begin{rem}
For later use, we remark that $p_k(1/2)$ simplifies to:
\begin{equation}\label{pk1/2}
p_k(1/2)=\sum_{\ell=1}^{k}\frac{1}{2^{2\ell-1}\ell}\binom{2\ell-2}{\ell-1}\binom{2k-2}{2\ell-2}=\frac{1}{2^{2k}k}\binom{4k-2}{2k-1}.
\end{equation}
This identity follows either by a direct generating function argument or from standard properties of the Gauss hypergeometric function.
\end{rem}

Before we can prove Proposition~\ref{result-SSTC} from the exponential generating function in Lemma~\ref{EGF-SSTC}, which is completely explicit due to Lemma~\ref{exp-Fkz}, we need a final technical lemma. We formulate this lemma in a more general form, as it will also be used in the proof of Lemma~\ref{rel-GTC-SSTC} in the next section.

We recall that $[z^n]f(z)$ denotes the $n$-th Maclaurin coefficient of $f(z)$.

\begin{lmm}\label{tech-lmm}
Let $1\leq\alpha=\Theta(k)$ and $0\leq m\leq k$. Then, as $n\rightarrow\infty$,
\[
[z^n](1-z)^{-\alpha}(1-\sqrt{1-z})^{m}=\frac{n^{\alpha-1}}{\Gamma(\alpha)}e^{-m\sqrt{\alpha/n}}\left(1+{\mathcal O}\left(\sqrt{\frac{k}{n}}+\frac{k^2}{n}\right)\right),
\]
uniformly for $1\leq k=o(n^{1/2})$.
\end{lmm}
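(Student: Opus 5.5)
Our approach is to expand the factor $(1-\sqrt{1-z})^m$ binomially in powers of $\sqrt{1-z}$ and extract the coefficient of each resulting term exactly. Writing $u=\sqrt{1-z}$, we have $(1-u)^m=\sum_{j=0}^m\binom{m}{j}(-1)^j u^j$, so
\[
[z^n](1-z)^{-\alpha}(1-\sqrt{1-z})^m=\sum_{j=0}^m\binom{m}{j}(-1)^j[z^n](1-z)^{-\alpha+j/2}=\sum_{j=0}^m\binom{m}{j}(-1)^j\frac{\Gamma(n+\alpha-j/2)}{\Gamma(\alpha-j/2)\,\Gamma(n+1)}.
\]
Terms where $\alpha-j/2$ is a nonpositive integer vanish, which is consistent with the formula if we read $1/\Gamma$ as $0$ there. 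The alternating sum suffers heavy cancellation, so a term-by-term estimate is hopeless; the plan has to avoid it.

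We therefore use a singularity-analysis / saddle-point representation instead. By Cauchy's formula on a Hankel-type contour around $z=1$, with the substitution $z=1+t/n$, we have
\[
[z^n](1-z)^{-\alpha}(1-\sqrt{1-z})^m=\frac{n^{\alpha-1}}{2\pi i}\int_{\mathcal H}(-t)^{-\alpha}\bigl(1-\sqrt{-t/n}\bigr)^m\Bigl(1+\frac tn\Bigr)^{-n-1}dt .
\]
Since $\alpha=\Theta(k)$ is large, the relevant region is not $t=O(1)$. The integrand $(-t)^{-\alpha}e^{-t}$ has its saddle at $-t=\alpha$, so we deform the contour to pass through $t=-\alpha$, that is, through $z=1-\alpha/n$. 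This point lies inside the unit disc because $\alpha=o(n)$.

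Near the saddle we write $-t=\alpha(1+i s/\sqrt{\alpha})$ and expand each factor. First, $(1+t/n)^{-n-1}=e^{-t}\exp(O(t^2/n))$, and $t^2/n=O(k^2/n)$. Second, $(1-\sqrt{-t/n})^m=\exp\bigl(m\log(1-\sqrt{\alpha/n}\,(1+is/\sqrt\alpha)^{1/2})\bigr)$. Its leading contribution is $e^{-m\sqrt{\alpha/n}}$. The remaining pieces are:
\begin{itemize}
\item the quadratic term $m\alpha/n=O(k^2/n)$;
\item the fluctuation term $m\sqrt{\alpha/n}\cdot s/\sqrt\alpha=O(ks/\sqrt{n k})=O(s\sqrt{k/n})$.
\end{itemize}
The remaining Gaussian integral in $s$ reproduces the saddle-point evaluation of $[z^n](1-z)^{-\alpha}=\frac{n^{\alpha-1}}{\Gamma(\alpha)}(1+O(k^2/n))$. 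Equivalently, and more cleanly, we could compare directly against this exact coefficient, so that the error terms are exactly $O(\sqrt{k/n}+k^2/n)$ as claimed.

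The tails, where $|s|$ is large or the contour is far from the saddle, are handled by the standard bound $|(-t)^{-\alpha}e^{-t}|\le$ (saddle value)$\cdot e^{-cs^2}$ on a circle $|t|=\alpha$ completed by Hankel legs. Along these we also need $|1-\sqrt{-t/n}|^m\le e^{-m\sqrt{\alpha/n}}e^{O(\dots)}$, which holds uniformly since $\sqrt{\alpha/n}\to0$.

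The main obstacle is making the error uniform. We must ensure that the fluctuation term $m\sqrt{\alpha/n}\,(\ldots)$ only contributes $O(\sqrt{k/n})$ after integrating against the Gaussian. The linear-in-$s$ part is odd and integrates to a lower order, so the cancellation is automatic. We also need the expansion $\log(1-x)=-x+O(x^2)$ with $x=\sqrt{\alpha/n}$ to contribute only $O(k^2/n)$ even when $m=0$ or $m$ is small. For small $\alpha$ (bounded $k$) the claim reduces to the classical Flajolet–Odlyzko transfer, so we may assume $k\to\infty$ when applying the saddle-point argument.
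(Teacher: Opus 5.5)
Your saddle-point route is genuinely different from the paper's, and it can be made to work; the paper itself remarks that complex analysis could be used. The paper stays on the real line. It writes the coefficient as the convolution $\sum_{j}[z^j]\tilde T(z)^m\binom{n-j+\alpha-1}{n-j}$ and proves the uniform estimate $\Gamma(n-j+\alpha)/\Gamma(n-j+1)=n^{\alpha-1}e^{-\alpha j/n}\bigl(1+\mathcal O(\alpha j^2/n^2+\alpha^2/n+j/n)\bigr)$ for all $0\le j\le n$. It then sums this against the nonnegative weights $[z^j]\tilde T(z)^m\rho^j$ with $\rho=e^{-\alpha/n}$, which is exactly your saddle point $z\approx 1-\alpha/n$. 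The main term is $\tilde T(\rho)^m=e^{-m\sqrt{\alpha/n}}(1+\mathcal O(k^2/n))$, and every error reduces to first and second moments of the tilted weights, read off from $\tilde T'(\rho)$ and $\tilde T''(\rho)$. Because the tilt $e^{-\alpha j/n}$ is built into the approximation and the weights are positive, no cancellation ever has to be analysed. Your approach is closer to standard singularity analysis and would extend more readily to full expansions or to factors without positive coefficients. The price is delicate error bookkeeping.

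As written, two steps in your sketch do not yet give the uniform error term.

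\emph{The fluctuation term.} It equals $m\sqrt{\alpha/n}\cdot s/(2\sqrt\alpha)=ms/(2\sqrt n)=\mathcal O(ks/\sqrt n)$, not $\mathcal O(s\sqrt{k/n})$. Bounded in absolute value, it only gives $\mathcal O(k/\sqrt n)$, which is of larger order than $\sqrt{k/n}+k^2/n$ once $k\to\infty$. So the odd-symmetry cancellation is essential, and it is not ``automatic''. What survives is the coupling with the cubic term $is^3/(3\sqrt\alpha)$ in the expansion of $(-t)^{-\alpha}e^{-t}$, which is of order $m/\sqrt{n\alpha}=\mathcal O(\sqrt{k/n})$, plus $\mathcal O(m^2/n)$ from the square of the linear term. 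A cleaner fix is to integrate the linear part exactly. Since $\sqrt{-t/n}=\sqrt{1-z}$, it contributes $-m\bigl([z^n](1-z)^{-\alpha+1/2}-\sqrt{\alpha/n}\,[z^n](1-z)^{-\alpha}\bigr)$. Using $\Gamma(\alpha)/\Gamma(\alpha-1/2)=\sqrt\alpha\,(1+\mathcal O(1/\alpha))$ and $\Gamma(n+\alpha-1/2)/\Gamma(n+\alpha)=n^{-1/2}(1+\mathcal O(\alpha/n))$, this is $\mathcal O(\sqrt{k/n}+k^{5/2}/n^{3/2})$ relative to $[z^n](1-z)^{-\alpha}$. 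The quadratic remainder can then be bounded in absolute value by a relative $\mathcal O(k^2/n)$.

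\emph{The tails.} There you must estimate $(1-\sqrt{1-z})^m e^{m\sqrt{\alpha/n}}-1$, not just $|1-\sqrt{1-z}|^m$. Bounding the tails of the two integrals separately yields relative errors no better than $e^{-c\alpha}$. That is not $\mathcal O(\sqrt{k/n})$ when $k\to\infty$ slowly (say $k\asymp\log\log n$), so splitting off bounded $k$ does not by itself give uniformity. The needed domination does hold on the circle $1-z=(\alpha/n)e^{i\theta}$, for three reasons:
\begin{itemize}
\item $|1-\sqrt{1-z}|^m e^{m\sqrt{\alpha/n}}\le\exp\bigl(m\sqrt{\alpha/n}\,(1-\cos(\theta/2))+\mathcal O(k^2/n)\bigr)$;
\item $1-\cos(\theta/2)\le(1-\cos\theta)/2$;
\item $m\sqrt{\alpha/n}=o(\alpha)$.
\end{itemize}
Hence the Gaussian factor $e^{-\alpha(1-\cos\theta)}$ still dominates. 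The Hankel legs are handled the same way.
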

\begin{proof}
The coefficient in question can be written as a convolution
\begin{equation}\label{conv}
[z^n](1-z)^{-\alpha}(1-\sqrt{1-z})^{m}=\sum_{j=0}^{n}[z^j]\tilde{T}(z)^m\binom{n-j+\alpha-1}{n-j},
\end{equation}
where $\tilde{T}(z)=T(z/2)=1-\sqrt{1-z}$. 

We first consider the binomial coefficient which can be written with the help of the gamma function as follows:
\[
\binom{n-j+\alpha-1}{n-j}=\frac{\Gamma(n-j+\alpha)}{\Gamma(\alpha)\Gamma(n-j+1)}.
\]
Our goal is to prove for it the following uniform asymptotic expansion: for $0\leq j\leq n$, as $n\rightarrow\infty$,
\begin{equation}\label{unif-gamma-ratio}
\frac{\Gamma(n-j+\alpha)}{\Gamma(n-j+1)}=n^{\alpha-1}e^{-\alpha j/n}\left(1+{\mathcal O}\left(\frac{\alpha j^2}{n^2}+\frac{\alpha^2}{n}+\frac{j}{n}\right)\right).
\end{equation}
We split the proof into two cases.

First, if $0\leq j\leq n/2$, by the well-known asymptotics of the ratio of two gamma functions
\begin{equation}\label{gamma-ratio}
\frac{\Gamma(n-j+\alpha)}{\Gamma(n-j+1)}=(n-j)^{\alpha-1}\left(1+{\mathcal O}\left(\frac{\alpha^2}{n-j}\right)\right)=(n-j)^{\alpha-1}\left(1+{\mathcal O}\left(\frac{\alpha^2}{n}\right)\right),
\end{equation}
where the last step follows since $n-j\geq n/2$. Next, for the leading term, as $n\rightarrow\infty$,
\begin{align*}
(n-j)^{\alpha-1}&=n^{\alpha-1}e^{(\alpha-1)\log(1-j/n)}\\[0.15cm]
&=n^{\alpha-1}e^{-(\alpha-1)j/n+{\mathcal O}(\alpha j^2/n^2)}\\
&=n^{\alpha-1}e^{-(\alpha-1)j/n}\left(1+{\mathcal O}\left(\frac{\alpha j^2}{n^2}\right)\right)\\
&=n^{\alpha-1}e^{-\alpha j/n}\left(1+{\mathcal O}\left(\frac{\alpha j^2}{n^2}+\frac{j}{n}\right)\right).
\end{align*}
Plugging this into (\ref{gamma-ratio}) yields (\ref{unif-gamma-ratio}) in this case.

Next, we consider the range $n/2<j\leq n$. Here, we define
\begin{equation}\label{factor-2}
R(j;n)=\frac{\Gamma(n-j+\alpha)}{\Gamma(n-j+1)}\cdot\frac{\Gamma(n+1)}{\Gamma(n+\alpha)}\cdot\left(\frac{n+\alpha}{n}\right)^j.
\end{equation}
Note that the second term in $R(j;n)$ has the expansion
\begin{equation}\label{factor-3}
\frac{\Gamma(n+1)}{\Gamma(n+\alpha)}=n^{1-\alpha}\left(1+{\mathcal O}\left(\frac{\alpha^2}{n}\right)\right)
\end{equation}
and the third term has the expansion
\[
\left(\frac{n+\alpha}{n}\right)^j=e^{\alpha j/n}\left(1+{\mathcal O}\left(\frac{\alpha^2 j}{n^2}\right)\right)=e^{\alpha j/n}\left(1+{\mathcal O}\left(\frac{\alpha^2}{n}\right)\right)
\]
which correspond to the reciprocals of the main terms on the right-hand side of (\ref{gamma-ratio}). (That is why these terms were added as multiplicative factors.) The sequence $R(j;n)$ has the product representation
\[
R(j;n)=\prod_{\ell=0}^{j-1}\left(1+\frac{n-\alpha\ell}{n(n-\ell+\alpha-1)}\right)
\]
whose terms exceed one if and only if $n-\alpha\ell>0$ which in turn holds if and only if $\ell<n/\alpha$. In that case, since $\alpha\geq 1$, we have
\[
\frac{n-\alpha\ell}{n-\ell+\alpha-1}\leq\alpha
\]
and thus these terms are at most $1+\alpha/n$. In addition, there are at most $n/\alpha+1$ such terms. Consequently, 
\[
R(j;n)\leq\left(1+\frac{\alpha}{n}\right)^{n/\alpha+1}={\mathcal O}(1)
\]
for $n/2<j\leq n$. Combining this with (\ref{factor-2}) and (\ref{factor-3}) gives
\[
\frac{\Gamma(n-j+\alpha)}{\Gamma(n-j+1)}={\mathcal O}\left(n^{\alpha-1}e^{-\alpha j/n}\right)
\]
for $n/2<j\leq n$. From this (\ref{unif-gamma-ratio}) follows also for this range due to the appearance of $j/n$ in the ${\mathcal O}$-term of (\ref{unif-gamma-ratio}) which is $j/n>1/2$.

We now plug (\ref{unif-gamma-ratio}) into (\ref{conv}) which produces the following sums
\begin{equation}\label{three-sums}
\sum_{j=0}^{n}[z^j]\tilde{T}(z)^m\rho^j,\qquad \sum_{j=0}^{n}j[z^j]\tilde{T}(z)^m\rho^j,\qquad \sum_{j=0}^{n}j^2[z^j]\tilde{T}(z)^m\rho^j,
\end{equation}
where $\rho=e^{-\alpha/n}$ with $\rho\sim1$ as $n\rightarrow\infty$. These sums can be estimated by using generating functions because
\[
\sum_{j=0}^{\infty}[z^j]\tilde{T}(z)^m\rho^jz^j=\tilde{T}(\rho z)^m
\]
and thus
\begin{equation}\label{first-mom}
\sum_{j=0}^{\infty}j[z^j]\tilde{T}(z)^m\rho^jz^{j-1}=\frac{d}{dz}\tilde{T}(\rho z)^m=m\tilde{T}(\rho z)^{m-1}\tilde{T}'(\rho z)\rho
\end{equation}
and
\begin{align}
\sum_{j=0}^{\infty}j^2[z^j]\tilde{T}(z)^m\rho^jz^{j-1}&=\frac{d}{dz}\left(z\frac{d}{dz}\tilde{T}(\rho z)^m\right)\nonumber\\
&=m\tilde{T}(\rho z)^{m-1}\tilde{T}'(\rho z)\rho+m(m-1)z\tilde{T}(\rho z)^{m-2}\tilde{T}'(\rho z)^2\rho^2\nonumber\\
&\qquad+mz\tilde{T}(\rho z)^{m-1}\tilde{T}''(\rho z)\rho^2.\label{third-mom}
\end{align}

We start by deriving an asymptotic expansion of the first sum in (\ref{three-sums}):
\begin{equation}\label{first-sum-dec}
\sum_{j=0}^{n}[z^j]\tilde{T}(z)^m\rho^j=\tilde{T}(\rho)^m-\sum_{j>n}[z^j]\tilde{T}(z)^m\rho^j.
\end{equation}
Note that
\begin{align}
\tilde{T}(\rho)^m&=(1-\sqrt{1-\rho})^m=e^{m\log(1-\sqrt{1-\rho})}=e^{-m\sqrt{1-\rho}+{\mathcal O}(m\alpha/n)}\nonumber\\
&=e^{-m\sqrt{\alpha/n}+{\mathcal O}(m\alpha/n)}=e^{-m\sqrt{\alpha/n}}\left(1+{\mathcal O}\left(\frac{k^2}{n}\right)\right)\label{tilde-T}
\end{align}
where in the last step, we have used that $\alpha=\Theta(k)$ and $m\leq k$. Moreover,
\[
\sum_{j>n}[z^j]\tilde{T}(z)^m\rho^j\leq\frac{1}{n}\sum_{j=0}^{\infty}j[z^j]\tilde{T}(z)^m\rho^j=\frac{m\tilde{T}(\rho)^{m-1}\tilde{T}'(\rho)\rho}{n},
\]
where the series is obtained by evaluating (\ref{first-mom}) at $z=1$. Note that $\tilde{T}'(\rho)=(1-\rho)^{-1/2}/2\sim\sqrt{n}/(2\sqrt{\alpha})$
and $\tilde{T}(\rho)\sim\rho\sim1$ as $n\rightarrow\infty$. Thus,
\begin{equation}\label{second-ub}
m\tilde{T}(\rho)^{m-1}\tilde{T}'(\rho)\rho={\mathcal O}\left(\tilde{T}(\rho)^m\frac{m\sqrt{n}}{\sqrt{\alpha}}\right)={\mathcal O}\left(e^{-m\sqrt{\alpha/n}}\sqrt{kn}\right),
\end{equation}
where we have again used that $\alpha=\Theta(k)$ and $m\leq k$. Consequently,
\[
\sum_{j>n}[z^j]\tilde{T}(z)^m\rho^j={\mathcal O}\left(e^{-m\sqrt{\alpha/n}}\sqrt{\frac{k}{n}}\right).
\]
Combining this with (\ref{tilde-T}) and plugging into (\ref{first-sum-dec}) gives
\begin{equation}\label{first-sum}
\sum_{j=0}^{n}[z^j]\tilde{T}(z)^m\rho^j=e^{-m\sqrt{\alpha/n}}\left(1+{\mathcal O}\left(\sqrt{\frac{k}{n}}+\frac{k^2}{n}\right)\right).
\end{equation}

Next, for the second sum in (\ref{three-sums}):
\begin{equation}\label{second-sum}
\sum_{j=0}^{n}j[z^j]\tilde{T}(z)^m\rho^j\leq m\tilde{T}(\rho)^{m-1}\tilde{T}'(\rho)\rho={\mathcal O}\left(e^{-m\sqrt{\alpha/n}}\sqrt{kn}\right),
\end{equation}
where the last step follows from (\ref{second-ub}).

Finally, for the third sum in (\ref{three-sums}), we upper bound it by (\ref{third-mom}) evaluated at $z=1$ which gives
\[
\sum_{j=0}^{n}j^2[z^j]\tilde{T}(z)^m\rho^j={\mathcal O}\left(\tilde{T}(\rho)^m\left(\frac{m\sqrt{n}}{\sqrt{\alpha}}+\frac{m^2n}{\alpha}+\frac{mn^{3/2}}{\alpha^{3/2}}\right)\right),
\]
where we have used that $\tilde{T}''(\rho)=(1-\rho)^{-3/2}/4\sim n^{3/2}/(4\alpha^{3/2})$ as $n\rightarrow\infty$. Thus, since $\alpha=\Theta(k)$ and $m\leq k$,
\begin{equation}\label{third-sum}
\sum_{j=0}^{n}j^2[z^j]\tilde{T}(z)^m\rho^j={\mathcal O}\left(e^{-m\sqrt{\alpha/n}}\left(kn+n\sqrt{\frac{n}{k}}\right)\right).
\end{equation}

Now, we finally can plug (\ref{unif-gamma-ratio}) into (\ref{conv}) and use (\ref{first-sum}), (\ref{second-sum}), and (\ref{third-sum}) which gives the claimed result.
\end{proof}

\begin{rem}
The last lemma is a uniform version of the transfer theorems from singularity analysis; see Chapter VI in \cite{FlSe}. It could also have been derived using complex analysis; however, we decided to follow the above more elementary approach.
\end{rem}

As a corollary, we have the following.
\begin{cor}\label{tech-cor}
Let $q_k(z)$ be a polynomial of degree $k$ with non-negative coefficients. Then, with the same assumptions as in Lemma~\ref{tech-lmm},
\[
[z^n](1-z)^{-\alpha}(1-\sqrt{1-z})^{m}q_k(z)=q_k(1)\frac{n^{\alpha-1}}{\Gamma(\alpha)}e^{-m\sqrt{\alpha/n}}\left(1+{\mathcal O}\left(\sqrt{\frac{k}{n}}+\frac{k^2}{n}\right)\right),
\]
uniformly for $1\leq k=o(n^{1/2})$.
\end{cor}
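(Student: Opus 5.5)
Write $q_k(z)=\sum_{i=0}^{k}c_iz^i$ with $c_i\geq0$. By linearity,
\[
[z^n](1-z)^{-\alpha}(1-\sqrt{1-z})^{m}q_k(z)=\sum_{i=0}^{k}c_i\,[z^{n-i}](1-z)^{-\alpha}(1-\sqrt{1-z})^{m}.
\]
The plan is to apply Lemma~\ref{tech-lmm} to each coefficient with $n$ replaced by $n-i$. Since $0\leq i\leq k=o(n^{1/2})$, we have $n-i\sim n$, so the hypotheses of the lemma still hold uniformly. Their error terms $\mathcal{O}(\sqrt{k/(n-i)}+k^2/(n-i))$ are all $\mathcal{O}(\sqrt{k/n}+k^2/n)$, uniformly in $i$, because the constant in the lemma is uniform.

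The main step is to show that the main terms for $n-i$ agree with those for $n$ up to the same relative error, uniformly in $0\leq i\leq k$.

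\emph{Power factor.} We have
\[
(n-i)^{\alpha-1}=n^{\alpha-1}\exp\bigl((\alpha-1)\log(1-i/n)\bigr)=n^{\alpha-1}\bigl(1+\mathcal{O}(\alpha i/n)\bigr),
\]
and $\alpha i/n=\mathcal{O}(k^2/n)=o(1)$ since $\alpha=\Theta(k)$ and $i\leq k$.

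\emph{Exponential factor.} Here
\[
m\sqrt{\alpha/(n-i)}-m\sqrt{\alpha/n}=\mathcal{O}\bigl(m\sqrt{\alpha}\,i/n^{3/2}\bigr)=\mathcal{O}\bigl(k^{5/2}/n^{3/2}\bigr).
\]
This is $\mathcal{O}(k^2/n)$, because $k^{1/2}/n^{1/2}=o(1)$. Hence
\[
e^{-m\sqrt{\alpha/(n-i)}}=e^{-m\sqrt{\alpha/n}}\bigl(1+\mathcal{O}(k^2/n)\bigr).
\]

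Multiplying these gives, uniformly in $i$,
\[
[z^{n-i}](1-z)^{-\alpha}(1-\sqrt{1-z})^{m}=\frac{n^{\alpha-1}}{\Gamma(\alpha)}e^{-m\sqrt{\alpha/n}}\left(1+\mathcal{O}\left(\sqrt{\frac{k}{n}}+\frac{k^2}{n}\right)\right).
\]

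To sum over $i$, I would use the non-negativity of the $c_i$. Each summand equals $c_i$ times the common main term times $(1+\varepsilon_i)$, where $|\varepsilon_i|\leq C(\sqrt{k/n}+k^2/n)$ with one constant $C$. Then
\[
\sum_i c_i(1+\varepsilon_i)=q_k(1)\bigl(1+\mathcal{O}(\sqrt{k/n}+k^2/n)\bigr),
\]
since $\bigl|\sum_i c_i\varepsilon_i\bigr|\leq \max_i|\varepsilon_i|\sum_i c_i=\max_i|\varepsilon_i|\,q_k(1)$. This is exactly where $c_i\geq0$ is needed, so that no cancellation can spoil the relative error.

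The only point needing care, and the one I expect to be the main obstacle, is uniformity. We need the $\mathcal{O}$-constant in Lemma~\ref{tech-lmm} to hold uniformly when $n$ is shifted by up to $k$. We also need the degree $k$ of $q_k$ to grow with $n$ without accumulating errors. The argument above handles the latter through the maximum bound rather than a sum of errors, and the former because the lemma is uniform in $k=o(n^{1/2})$ and $n-i\geq n/2$.
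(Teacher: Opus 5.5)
Your proposal is correct and matches the paper's proof essentially step for step. Both expand $q_k$, apply Lemma~\ref{tech-lmm} at $n-d$ uniformly in $0\leq d\leq k$, and show that $(n-d)^{\alpha-1}=n^{\alpha-1}(1+\mathcal{O}(k^2/n))$ and $e^{-m\sqrt{\alpha/(n-d)}}=e^{-m\sqrt{\alpha/n}}(1+\mathcal{O}(k^{5/2}/n^{3/2}))$; your explicit bound $\bigl|\sum_i c_i\varepsilon_i\bigr|\leq\max_i|\varepsilon_i|\,q_k(1)$ just spells out the final summation step, and the role of non-negativity, which the paper leaves implicit.
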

\begin{proof}
Set $q_k(z)=\sum_{d=0}^{k}\omega_dz^d$. Then,
\begin{equation}\label{coeff-ext}
[z^n](1-z)^{-\alpha}(1-\sqrt{1-z})^mq_k(z)=\sum_{d=0}^{k}\omega_d[z^{n-d}](1-z)^{-\alpha}(1-\sqrt{1-z})^m.
\end{equation}
From the last lemma, uniformly in $0\leq d\leq k$,
\[
[z^{n-d}](1-z)^{-\alpha}(1-\sqrt{1-z})^m=\frac{(n-d)^{\alpha-1}}{\Gamma(\alpha)}e^{-m\sqrt{\alpha/(n-d)}}\left(1+{\mathcal O}\left(\sqrt{\frac{k}{n}}+\frac{k^2}{n}\right)\right).
\]
Note that
\[
(n-d)^{\alpha-1}=n^{\alpha-1}e^{(\alpha-1)\log(1-d/n)}=n^{\alpha-1}e^{{\mathcal O}(\alpha d/n)}=n^{\alpha-1}\left(1+{\mathcal O}\left(\frac{k^2}{n}\right)\right)
\]
and
\[
e^{-m\sqrt{\alpha/(n-d)}}=e^{-m\sqrt{\alpha/n}+{\mathcal O}(md\sqrt{\alpha}/n^{3/2})}=e^{-m\sqrt{\alpha/n}}\left(1+{\mathcal O}\left(\frac{k^{5/2}}{n^{3/2}}\right)\right).
\]
Thus, uniformly in $0\leq d\leq k$
\[
[z^{n-d}](1-z)^{-\alpha}(1-\sqrt{1-z})^m=\frac{n^{\alpha-1}}{\Gamma(\alpha)}e^{-m\sqrt{\alpha/n}}\left(1+{\mathcal O}\left(\sqrt{\frac{k}{n}}+\frac{k^2}{n}\right)\right).
\]
Plugging this into (\ref{coeff-ext}) gives the claimed result.
\end{proof}

Now, we can prove Proposition~\ref{result-SSTC} in a slightly stronger form.
\begin{proof}[Proof of Proposition~\ref{result-SSTC}]
From Lemma~\ref{EGF-SSTC},
\[
{\rm SSTC}_{n,k}=n![z^n]F_k(z)\frac{(1-\sqrt{1-2z})^k}{k!}.
\]
Plugging into this the expression for $F_k(z)$ from Lemma~\ref{exp-Fkz},
\begin{align*}
{\rm SSTC}_{n,k}&=\frac{(2k)!}{2^kk!}n![z^n](1-2z)^{-2k+1/2}(1-\sqrt{1-2z})^kp_k(z)\\
&=\frac{(2k)!}{2^kk!}2^nn![z^n](1-z)^{-2k+1/2}(1-\sqrt{1-z})^kp_k(z/2).
\end{align*}
Applying Corollary~\ref{tech-cor} (with $\alpha=2k-1/2$ and $m=k$) and Stirling's formula,
\begin{align*}
{\rm SSTC}_{n,k}&=\frac{(2k)!}{2^kk!}p_k(1/2)\frac{n^{2k-3/2}}{\Gamma(2k-1/2)}2^nn!e^{-\sqrt{2k^3/n}}\left(1+{\mathcal O}\left(\sqrt{\frac{k}{n}}+\frac{k^2}{n}\right)\right)\\
&=c_k\left(\frac{2}{e}\right)^nn^{n+2k-1}e^{-\sqrt{2k^3/n}}\left(1+{\mathcal O}\left(\sqrt{\frac{k}{n}}+\frac{k^2}{n}\right)\right),
\end{align*}
where
\begin{equation}\label{ck}
c_k:=\frac{(2k)!p_k(1/2)\sqrt{2\pi}}{2^kk!\Gamma(2k-1/2)}.
\end{equation}

What is left is to simplify the constant. From the duplication formula of the gamma function:
\[
\Gamma(2k-1/2)=\frac{(4k-3)!\sqrt{\pi}}{2^{4k-3}(2k-2)!}
\]
Plugging this into (\ref{ck}) and using (\ref{pk1/2}) gives
\[
c_k=\frac{2^{k-1}\sqrt{2}}{k!}
\]
which is the claimed result that in addition also contains an error term.
\end{proof}

\section{Galled Networks}\label{gn}

In this section, we prove Theorem~\ref{result-GN}, which is our second main result. The main step of the proof is to show that Proposition~\ref{result-SSTC} extends to networks that are both tree-child and galled. These networks have been named {\it galled tree-child networks} in \cite{ChFuYu}. Let ${\rm GTC}_{n,k}$ denote their number with $n$ leaves and $k$ reticulations. A useful feature of this class is that their bivariate exponential generating function, i.e.,
\[
G(z,v):=\sum_{n\geq 1}\sum_{k\geq 0}{\rm GTC}_{n,k}\frac{z^n}{n!}v^k
\]
satisfies an implicit equation involving the function $F_k(z)$ from the last section; see \cite{ChFuYu}.

\begin{lmm}[\cite{ChFuYu}]
We have,
\begin{equation}\label{eq-Gzv}
G(z,v)=\sum_{j\geq 0}F_j(z)\frac{(vG(z,v))^j}{j!}.
\end{equation}
\end{lmm}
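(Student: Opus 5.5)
We will prove the functional equation (\ref{eq-Gzv}) through a combinatorial decomposition of galled tree-child networks, in the same spirit as the proof of Lemma~\ref{EGF-SSTC}. The idea is to peel off all reticulations that sit at the top level of the network. Take a galled tree-child network $N$. For each reticulation $r$, the tree-child property guarantees that the child of $r$ is a tree node or a leaf. Let $N_r$ denote the part of $N$ hanging below $r$, i.e., the subnetwork rooted at the child of $r$.

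The first step is to show that these pieces are disjoint, so that $N$ factors. Call a reticulation \emph{maximal} if it is not a descendant of any other reticulation. We claim that the subnetworks below distinct maximal reticulations are disjoint. We also claim that each of them, after adding a root edge, is again a galled tree-child network. The galled property is what makes this work. Every reticulation $r'$ below a maximal $r$ lies in a tree cycle, all of whose non-extremal nodes are tree nodes. So the tree cycle of $r'$ cannot escape $N_r$ through $r$: if it did, $r$ would be an interior node of the cycle, which is forbidden. Hence no node below $r$ can be reached by a path avoiding $r$, except possibly through another reticulation. Applying the same tree-cycle argument recursively shows that $N_r$ is closed under descendants and has $r$ as its only entry point. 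Disjointness then follows from maximality together with the tree-child property.

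The second step is to collapse. Replace each maximal reticulation's subnetwork $N_r$ by a single marked leaf. The result is a tree-child network in which every reticulation has a leaf child, i.e., a simplex tree-child network. If there are $j$ maximal reticulations, it has $j$ marked leaves below reticulations and $\ell$ ordinary leaves. Conversely, take any simplex tree-child network of this kind together with an unordered set of $j$ galled tree-child networks, and substitute them in. The result is galled and tree-child, because the top-level tree cycles are unaffected by the substitution and the tree-child condition is local. The substitution is performed exactly as in Lemma~\ref{EGF-SSTC}: the pieces are ordered by their smallest labels and assigned to the marked leaves $1,\ldots,j$, and the labels are then redistributed.

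The third step is the translation into generating functions. Each substituted piece contributes a factor $vG(z,v)$, where the extra $v$ accounts for its own top reticulation $r$. The $1/j!$ comes from the unordered set. The simplex part contributes $F_j(z)$, with the $j=0$ term $F_0=T$ accounting for trees. Summing over $j$ gives (\ref{eq-Gzv}).

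The main obstacle is the first step, namely checking rigorously that the galled condition (together with tree-child) forces $N_r$ to be attached to the rest of the network only through $r$. The concern is a reticulation below $r$ whose second parent lies outside $N_r$. We will rule this out by examining the tree cycle of such a reticulation. Its top node would be an ancestor of $r$, and one of its two paths would then have to pass through the reticulation $r$ as an interior node, which contradicts the definition of a tree cycle. Since this part may be delicate, we may alternatively simply cite \cite{ChFuYu}, where the equation is established.
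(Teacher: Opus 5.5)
Your decomposition is correct and follows the natural route. The paper gives no proof here and cites \cite{ChFuYu}, but your argument is the substitution of Lemma~\ref{EGF-SSTC} with galled tree-child networks in place of trees, and the galled condition is what forces everything below a maximal reticulation to be attached only through it.

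Three details need tightening. First, the top node of the offending tree cycle need not be an ancestor of $r$ unless you pick the offending reticulation topmost. It is simpler to note that any path entering the descendant set of $r$ from outside does so at a reticulation (either $r$ or one with a parent outside), and that reticulation would be an interior node of the cycle. Second, disjointness comes from this single-entry property plus maximality, not from the tree-child property. Third, your converse tacitly uses that every simplex tree-child network is galled. This holds because no reticulation there has a non-leaf descendant, so the two parents of each reticulation have unique root paths, which split at a tree node and form a tree cycle.
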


Using the Lagrange inversion formula, a closed form expression can be derived from this for the exponential generating function of ${\rm GTC}_{n,k}$:
\[
E_k(z):=\sum_{n\geq 1}{\rm GTC}_{n,k}\frac{z^n}{n!}.
\]

\begin{lmm} For $k\geq 1$,
\begin{equation}\label{Ekz}
E_k(z)=\frac{1}{k+1}\sum_{s=1}^{k}\binom{k+1}{s}F_0(z)^{k+1-s}\sum_{\substack{j_1+\cdots+j_s=k\\j_i\geq1}}
\prod_{i=1}^{s}\frac{F_{j_i}(z)}{j_i!}.
\end{equation}
\end{lmm}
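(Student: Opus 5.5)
The plan is to apply Lagrange inversion to the implicit equation (\ref{eq-Gzv}), treating $z$ as a fixed parameter and $v$ as the variable. Set $W(v):=vG(z,v)$. Then (\ref{eq-Gzv}) becomes
\[
W=v\,\Phi(W),\qquad \Phi(w):=\sum_{j\geq0}F_j(z)\frac{w^j}{j!}.
\]
This is the standard Lagrangian form, with $\Phi(0)=F_0(z)=T(z)$. As a formal power series in $w$ over $\mathbb{Q}[[z]]$, the constant term $T(z)$ is not invertible, since $T(0)=0$. So I would first remark that the inversion is legitimate: either work over the field of formal Laurent series in $z$, or simply observe that the Lagrange identity $[v^n]W=\frac1n[w^{n-1}]\Phi(w)^n$ is a polynomial identity in the coefficients of $\Phi$. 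It therefore holds without any invertibility assumption, because the equation $W=v\Phi(W)$ determines $W$ uniquely as a power series in $v$ with zero constant term.

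Next, since $E_k(z)=[v^k]G(z,v)=[v^{k+1}]W(v)$, Lagrange inversion gives
\[
E_k(z)=\frac{1}{k+1}[w^k]\Phi(w)^{k+1}.
\]
I would then expand $\Phi(w)^{k+1}=\bigl(F_0(z)+\sum_{j\geq1}F_j(z)w^j/j!\bigr)^{k+1}$ by the binomial theorem:
\[
\Phi(w)^{k+1}=\sum_{s=0}^{k+1}\binom{k+1}{s}F_0(z)^{k+1-s}\Bigl(\sum_{j\geq1}F_j(z)\frac{w^j}{j!}\Bigr)^{s}.
\]
Extracting $[w^k]$ from the $s$-th power yields the sum over compositions $j_1+\cdots+j_s=k$ with $j_i\geq1$ of $\prod_i F_{j_i}(z)/j_i!$. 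The term $s=0$ contributes only to $w^0$, so it vanishes for $k\geq1$. The term $s=k+1$ needs $k+1$ parts each at least $1$ summing to $k$, which is impossible, so it vanishes too. This leaves exactly $1\leq s\leq k$, which is (\ref{Ekz}).

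The only genuinely delicate point is the justification of Lagrange inversion when $\Phi(0)=T(z)$ is not a unit. I would handle it by the remark above. If a cleaner route is wanted, substitute $v\mapsto v/T(z)$ over the field $\mathbb{Q}((z))$ to normalize $\Phi(0)=1$, apply the standard theorem, and undo the substitution; the $T(z)$ factors cancel. Everything else is routine coefficient bookkeeping.
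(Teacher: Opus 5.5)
Your proposal is correct and follows essentially the same route as the paper: both set $H=vG$, write $H=v\Phi(H)$, apply Lagrange inversion to get $E_k(z)=\frac{1}{k+1}[\omega^k]\Phi(\omega)^{k+1}$, and expand the power. Your remark that the Lagrange identity needs no invertibility of $\Phi(0)=T(z)$, and your check that the $s=0$ and $s=k+1$ terms vanish, are careful additions that the paper leaves implicit.
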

\begin{proof}
Set $H(z,v):=vG(z,v)$. Then, (\ref{eq-Gzv}) can be rewritten as:
\[
H(z,v)=v\Phi(H(z,v)),\qquad\Phi(\omega)=\sum_{j\geq 0}F_j(z)\frac{\omega^j}{j!}.
\]
This is exactly the setting to which the Lagrange inversion formula can be applied, which yields
\[
E_k(z)=[v^{k}]G(z,v)=[v^{k+1}]H(z,v)=\frac{1}{k+1}[\omega^k]\left(\sum_{j\geq 0}F_j(z)\frac{\omega^j}{j!}\right)^{k+1}.
\]
Expanding the power gives the claimed result.
\end{proof}

We will use this in order to show that ${\rm GTC}_{n,k}$ is closely related to ${\rm SSTC}_{n,k}$.

\begin{lmm}\label{rel-GTC-SSTC}
As $n\rightarrow\infty$,
\[
{\rm GTC}_{n,k}={\rm SSTC}_{n,k}\left(1+{\mathcal O}\left(\sqrt{\frac{k}{n}}\right)\right).
\]
uniformly for $k=o(n^{1/2})$.
\end{lmm}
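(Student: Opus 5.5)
The plan is to start from the closed form (\ref{Ekz}) and isolate the term that equals the semi-simplex count. The summand with $s=1$ is
\[
\frac{1}{k+1}\binom{k+1}{1}F_0(z)^{k}\frac{F_k(z)}{k!}=F_k(z)\frac{T(z)^k}{k!},
\]
which by Lemma~\ref{EGF-SSTC} is exactly the exponential generating function of ${\rm SSTC}_{n,k}$. It therefore suffices to show that the remaining terms, with $2\leq s\leq k$, contribute $n![z^n]$-coefficients of total size ${\rm SSTC}_{n,k}\cdot{\mathcal O}(\sqrt{k/n})$. All coefficients involved are non-negative, so only upper bounds are needed.

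For a fixed composition $j_1+\cdots+j_s=k$, I would use Lemma~\ref{exp-Fkz} to write
\[
\prod_i \frac{F_{j_i}(z)}{j_i!}=\prod_i\frac{(2j_i)!}{2^{j_i}j_i!}\,(1-2z)^{-2k+s/2}\prod_i p_{j_i}(z).
\]
The singular exponent is $\alpha_s=2k-s/2$. Compared with the $s=1$ case, this loses a factor $(1-2z)^{(s-1)/2}$. After substituting $z\mapsto z/2$, Corollary~\ref{tech-cor} applies with $\alpha=\alpha_s$ (this is $\Theta(k)$ because $s\leq k$), $m=k+1-s$, and the polynomial $q=\prod_i p_{j_i}(z/2)$, which has degree $k$ and non-negative coefficients. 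If $\alpha_s<1$ is a problem for small $k$, that is only a finite issue and could be handled separately. The coefficient of the $s$-term is then
\[
\approx \frac{n^{2k-s/2-1}}{\Gamma(2k-s/2)}\,\prod_i\frac{(2j_i)!\,p_{j_i}(1/2)}{2^{j_i}j_i!}\,e^{-(k+1-s)\sqrt{\alpha_s/n}}.
\]
Relative to the main term, this gains $n^{-(s-1)/2}$. The exponential factor changes by at most $e^{{\mathcal O}(sk^{1/2}/n^{1/2})}$, which is harmless. The gamma ratio $\Gamma(2k-1/2)/\Gamma(2k-s/2)$ is at most about $(2k)^{(s-1)/2}$. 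So each $s$-term relative to ${\rm SSTC}_{n,k}$ is roughly $(Ck/n)^{(s-1)/2}$ times a combinatorial factor $\binom{k+1}{s}/(k+1)$ summed over compositions, times the ratio of the products of $c_{j}$-type constants to $c_k$.

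The main obstacle is the combinatorial constant bookkeeping. Using the simplification $c_j=\frac{(2j)!p_j(1/2)\sqrt{2\pi}}{2^jj!\Gamma(2j-1/2)}=2^{j-1}\sqrt2/j!$ from the proof of Proposition~\ref{result-SSTC}, I would rewrite $\frac{(2j)!p_j(1/2)}{2^jj!}=c_j\Gamma(2j-1/2)/\sqrt{2\pi}$. The $s$-term ratio then involves
\[
\frac{\prod_i\Gamma(2j_i-1/2)}{\Gamma(2k-s/2)}\cdot\frac{\prod_i c_{j_i}}{c_k}.
\]
Here the $c$-ratio is $2^{-(s-1)/2}\,k!/\prod_i j_i!$, up to a $2^{\cdot}$ factor, and the gamma ratio behaves like a reciprocal multinomial in $4k$ with parts $4j_i$. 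Their product should be summable over compositions and bounded by $C^s$ times a polynomial factor in $k$ that is absorbed by $(k/n)^{(s-1)/2}$.

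Concretely, I would bound the $s$-term by $(Ak/n)^{(s-1)/2}$ for an absolute constant $A$, using crude estimates such as $\binom{k+1}{s}\leq k^{s}/s!$ and $\binom{k-1}{s-1}$ for the number of compositions against the multinomial decay. The errors from Corollary~\ref{tech-cor} are uniform across all terms. Summing the geometric series over $s\geq2$ then gives ${\mathcal O}(\sqrt{k/n})$, since $k=o(n^{1/2})$ makes $k/n\to0$. If the crude estimate loses too much, I would fall back on comparing $\prod_i\Gamma(2j_i-1/2)/\Gamma(2k-s/2)$ with the Beta integral, which is at most $1$ times an exponential in $s$.
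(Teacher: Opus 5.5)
\textbf{Verdict: there is a genuine gap.} Your reduction is the same as the paper's. The $s=1$ summand of (\ref{Ekz}) is the generating function of ${\rm SSTC}_{n,k}$. Each $s\geq 2$ summand is handled by Corollary~\ref{tech-cor} with $\alpha=2k-s/2\geq 3k/2$ (so there is no small-$\alpha$ issue) and $m=k+1-s$, together with $\Gamma(2k-1/2)/\Gamma(2k-s/2)={\mathcal O}((2k)^{(s-1)/2})$ and a harmless shift in the exponential. Relative to ${\rm SSTC}_{n,k}$, the $s$-th summand is then of order
\[
\frac{1}{k+1}\binom{k+1}{s}\cdot\frac{1}{b_k}\sum_{\substack{j_1+\cdots+j_s=k\\ j_i\geq 1}}b_{j_1}\cdots b_{j_s}\cdot\Bigl(\frac{Ck}{n}\Bigr)^{(s-1)/2},\qquad b_j=a_jp_j(1/2).
\]
The prefactor is roughly $k^{s-1}/s!$. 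So your target $(Ak/n)^{(s-1)/2}$ is reached only if the normalized composition sum is at most $s!\,(c_0/k)^{s-1}$. There is no spare power of $k$ that $(k/n)^{(s-1)/2}$ could absorb without spoiling the ${\mathcal O}(\sqrt{k/n})$ error. This estimate is the paper's Lemma~\ref{final-lmm}, and it is what your proposal lacks: you flag it as the main obstacle but do not give an argument that works.

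The concrete route you suggest fails. Bounding the sum by the number of compositions, $\binom{k-1}{s-1}$, times the largest term loses a factor of order $\binom{k-1}{s-1}/s\approx k^{s-1}/s!$. Take $s=2$: $b_rb_{k-r}/b_k$ is $\Theta(1/k)$ only for $r\in\{1,k-1\}$ and is ${\mathcal O}(1/\binom{k}{r})$ for $2\leq r\leq k-2$. So the true sum is $\Theta(1/k)$, whereas count-times-maximum gives $\Theta(1)$. The $s=2$ summand would then only be bounded by ${\mathcal O}(\sqrt{k^3/n})$, which is not even $o(1)$ when $k/n^{1/3}\to\infty$ --- exactly the regime the lemma is needed for.

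The Beta-integral fallback is worse. Bounding $\prod_i\Gamma(2j_i-1/2)/\Gamma(2k-s/2)$ by $C^s$ throws away its reciprocal-multinomial decay (in $2k$ with parts $2j_i$, not $4k$). That decay is what must cancel the multinomial $k!/\prod_i j_i!$ in $\prod_i c_{j_i}/c_k$, so you are left with a bound of the size of that multinomial.

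What is needed is a proof that the sum is dominated by the $s$ compositions with one part equal to $k-s+1$ and all others equal to $1$. The paper gets this by induction on $s$. Peeling off $j_1=r$ reduces the claim to
\[
\sum_{r=1}^{k-s+1}\frac{b_rb_{k-r}}{(k-r)^{s-2}}\leq c_0\frac{b_k}{k^{s-1}}.
\]
This is checked separately for $r=1$, for $2\leq r\leq k/2$, and for $k/2<r\leq k-s+1$, using $b_j=\Theta(8^jj!/j^2)$.
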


The proof requires a final (technical) lemma. Set $a_k:=(2k)!/(2^k k!)$ and
\begin{equation}\label{asymp-bk}
b_k:=a_kp_k(1/2)=\frac{(4k-2)!}{2^{3k-1}(2k-1)!k!}b_k\sim\frac{8^k k!}{4\sqrt{2}\pi k^2}=\Theta\left(\frac{8^k k!}{k^2}\right),\qquad (k\rightarrow\infty),
\end{equation}
where the polynomials $p_k(z)$ are from Lemma~\ref{exp-Fkz}, the closed-form expression follows from identity (\ref{pk1/2}), and the asymptotic approximations follow from Stirling's formula. Then, we have the following property.
\begin{lmm}\label{final-lmm}
There exists an absolute constant $c_0>0$ such that for all $1\leq s\leq k$:
\[
\sum_{\substack{j_1+\cdots+j_s=k\\j_i\geq1}}b_{j_1}\cdots b_{j_s}\leq b_k\left(\frac{c_0}{k}\right)^{s-1}s!.
\]
\end{lmm}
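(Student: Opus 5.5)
Throughout, I plan to use only the asymptotic form $b_j\asymp 8^j j!/j^2$ from (\ref{asymp-bk}). More precisely, there are constants $0<C_1\le C_2$ with $C_1 8^j j!/j^2\le b_j\le C_2 8^j j!/j^2$ for all $j\ge1$; this holds since each $b_j>0$ and the ratio converges. The factor $8^k$ appears on both sides and cancels. So it suffices to prove
\[
\sum_{\substack{j_1+\cdots+j_s=k\\ j_i\geq 1}}\prod_{i=1}^s\frac{j_i!}{j_i^2}\le \frac{k!}{k^2}\left(\frac{c_1}{k}\right)^{s-1}s!,
\]
with $c_0=c_1C_2/C_1$ absorbing the constants; the extra factor $C_2^s/C_1$ is harmless since $C_2^{s}\le C_2\cdot C_2^{s-1}$ goes into $c_0^{s-1}$.

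I would argue by induction on $s$. The case $s=1$ is trivial. For the step, write the composition as $(j_1,\dots,j_{s-1},j_s)$ with $j_s=r$ and $j_1+\cdots+j_{s-1}=k-r$, where $k-r\ge s-1$. The induction hypothesis bounds the inner sum by $\frac{(k-r)!}{(k-r)^2}\left(\frac{c_1}{k-r}\right)^{s-2}(s-1)!$. Hence I need
\[
\sum_{r=1}^{k-s+1}\frac{r!}{r^2}\cdot\frac{(k-r)!}{(k-r)^{s}}\,c_1^{s-2}(s-1)!\le \frac{k!}{k^{s+1}}c_1^{s-1}s!.
\]
Equivalently, the quantity $S:=\sum_r \frac{r!(k-r)!}{k!}\cdot\frac{k^{s+1}}{r^2(k-r)^s}$ should be at most $c_1 s$. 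In fact I expect to show $S=O(1)$, uniformly in $s\le k$.

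To bound $S$, I would split the sum according to whether $r\le k/2$ or $r>k/2$.

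For $r\le k/2$: we have $(k-r)^{-s}k^{s}\le (1-r/k)^{-s}$ and $r!(k-r)!/k!=1/\binom{k}{r}$. The term is therefore at most $\frac{k}{r^2\binom kr}(1-r/k)^{-s}$. For $r=1$ this equals $(1-1/k)^{-s}\le (1-1/k)^{-k}\le 4$. For general $r$ I would use $\binom kr\ge (k/r)^r$ together with $(1-r/k)^{-s}\le e^{2rs/k}\le e^{2r}$. However, this is too lossy for large $s$, so I must be more careful. Here the key point is that $r\le k-s+1$, i.e. $s\le k-r+1$. Then $\frac{k^{s}}{(k-r)^s}$ combined with $\frac{(k-r)!}{k!}$ behaves like $\prod_{i<r}\frac{k}{k-i}\cdot(\dots)$, and I would bound it through
\[
\frac{k^s (k-r)!}{(k-r)^s k!}\le \frac{k^r}{k(k-1)\cdots(k-r+1)}\Big(\frac{k}{k-r}\Big)^{s-r}.
\]
I expect this to be the main obstacle. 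The regime where $s$ is close to $k$ forces every $j_i$ to be small, so in that regime I would instead verify the bound directly. Indeed, for $s>k/2$, most parts equal $1$, and a crude count gives the sum at most $\binom{s}{k-s}$ times the maximal product. This can be compared against $k!k^{-s-1}s!\,c^{s-1}$ using Stirling's formula. So I plan to treat $s\le k/2$ by the induction estimate above, where $(k/(k-r))^{s}\le e^{O(rs/k)}$ is beaten by $1/\binom kr$, and to treat $s>k/2$ by the direct count.

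For $r>k/2$, I would use the symmetric bound: $r!/r^2\cdot(k-r)!\le k!/(k^2\binom{k}{r})\cdot O(1)$, while the factor $(k/(k-r))^{s}$ is controlled because $k-r\ge s-1$. These terms sum to $O(1)$ thanks to the geometric decay of $1/\binom kr$ away from the endpoints. The endpoint $r=k-s+1$ contributes like the $r=1$ term of the mirrored configuration. Summing the two ranges gives $S\le c_1 s$, which closes the induction.
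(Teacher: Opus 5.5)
Your overall architecture matches the paper's: induction on $s$, reduction of the inductive step to $S={\mathcal O}(1)$ uniformly in $s$ (this is exactly (\ref{claim})), and a split of the $r$-sum at $k/2$. Several pieces are fine:
\begin{itemize}
\item the $r=1$ term;
\item the range $r>k/2$: with $m=k-r\ge s-1$ and $\binom km\ge(k/m)^m$ one gets $(k/m)^s/\binom km\le(m/k)^{m-s}$, which is $k/(s-1)$ at $m=s-1$ and decays geometrically for $m\ge s$; together with the prefactor $k/r^2\le 4/k$ these terms contribute ${\mathcal O}(1)$;
\item your direct count for $s>k/2$, which works because there is superexponential slack (so it does not matter that the number of compositions is really $\binom{k-1}{s-1}$, not $\binom{s}{k-s}$).
\end{itemize}

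The gap is the range $2\le r\le k/2$, which is the heart of the lemma and is never actually estimated. Your displayed inequality has right-hand side exactly $(k-r)^r$ times its left-hand side: the factor $(k-r)^{-r}$ coming from $(k/(k-r))^r$ was dropped. So it is true but useless. The fallback claim, that for $s\le k/2$ the factor $(k/(k-r))^s\le e^{{\mathcal O}(rs/k)}$ ``is beaten by'' $1/\binom kr$, is only asserted. It also does not follow from the bounds you mention: $e^{2rs/k}\le e^r$ together with $\binom kr\ge(k/r)^r$ bounds the term by $\frac{k}{r^2}(er/k)^r$. Near $r=k/2$ this is about $\frac4k(e/2)^{k/2}\to\infty$.

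The missing idea is to use the full Stirling form of $\binom kr$ together with the summation constraint $s\le k-r+1$. You named this constraint as the key point but then did not exploit it:
\[
\frac{(k/(k-r))^{s}}{\binom kr}\le\frac{(k/(k-r))^{k-r+1}}{\binom kr}={\mathcal O}\left(\left(\frac rk\right)^r\sqrt{\frac{rk}{k-r}}\right)={\mathcal O}\left(\sqrt r\left(\frac rk\right)^r\right).
\]
This holds because the factor $(k-r)^{k-r}$ in $\binom kr\asymp\sqrt{k/(r(k-r))}\,k^k/(r^r(k-r)^{k-r})$ cancels exactly. The $r$-th term of $S$ is then ${\mathcal O}(kr^{-3/2}(r/k)^r)$, and these terms sum to ${\mathcal O}(1/k)$ over $2\le r\le k/2$, uniformly in $s$. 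This is precisely the paper's computation, phrased there as $(1-r/k)^{k-r+1-s}\le1$ after applying Stirling to $r!(k-r)!$. It also makes your case split into $s\le k/2$ and $s>k/2$, and the direct count, unnecessary.
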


\begin{proof}
We use induction on $s$. First, if $s=1$, the claim trivially holds. So, assume that the claim is true for $s-1$. We are going to show it for $s$. By using the induction hypothesis,
\begin{align*}
\sum_{\substack{j_1+\cdots+j_s=k\\j_i\geq1}}b_{j_1}\cdots b_{j_s}&=\sum_{r=1}^{k-s+1}b_r\sum_{\substack{j_2+\cdots+j_s=k-r\\j_i\geq1}}b_{j_2}\cdots b_{j_s}\\
&\leq c_0^{s-2}(s-1)!\sum_{r=1}^{k-s+1}\frac{b_rb_{k-r}}{(k-r)^{s-2}}.
\end{align*}
The claim follows from this, if we can prove that
\begin{equation}\label{claim}
\sum_{r=1}^{k-s+1}\frac{b_rb_{k-r}}{(k-r)^{s-2}}\leq c_0\frac{b_k}{k^{s-1}}
\end{equation}
for all $s\leq k$. In order to show this, we use (\ref{asymp-bk}) which gives
\begin{equation}\label{conv-sum}
\sum_{r=1}^{k-s+1}\frac{b_rb_{k-r}}{(k-r)^{s-2}}={\mathcal O}\left(8^k\sum_{r=1}^{k-s+1}\frac{r!(k-r)!}{r^2(k-r)^s}\right),
\end{equation}
where the implied constant is absolute. We break the sum inside the ${\mathcal O}$-term into three parts:
\begin{equation}\label{three-parts}
\sum_{r=1}^{k-s+1}\frac{r!(k-r)!}{r^2(k-r)^s}=\frac{(k-1)!}{(k-1)^s}+\sum_{2\leq r\leq k/2}\frac{r!(k-r)!}{r^2(k-r)^s}+\sum_{k/2<r\leq k-s+1}\frac{r!(k-r)!}{r^2(k-r)^s}.
\end{equation}

For the first part, we have
\[
\frac{(k-1)!}{(k-1)^s}=\frac{k!}{k^{s+1}}\cdot\frac{k^{s}}{(k-1)^s}\leq\frac{k!}{k^{s+1}}e^{s/(k-1)}={\mathcal O}\left(\frac{k!}{k^{s+1}}\right),
\]
where the implied constant is absolute. For the second part, by Stirling's formula,
\begin{align*}
\sum_{2\leq r\leq k/2}\frac{r!(k-r)!}{r^2(k-r)^s}&={\mathcal O}\left(\frac{1}{e^k}\sum_{2\leq r\leq k/2}r^{r-3/2}(k-r)^{k-r+1/2-s}\right)\\
&={\mathcal O}\left(\frac{1}{e^k\sqrt{k}}\sum_{2\leq r\leq k/2}r^{r-3/2}(k-r)^{k-r+1-s}\right)\\
&={\mathcal O}\left(\frac{k^{k-1/2-s}}{e^k\sqrt{k}}\sum_{2\leq r\leq k/2}\left(\frac{r}{k}\right)^{r-3/2}\left(1-\frac{r}{k}\right)^{k-r+1-s}\right)\\
&={\mathcal O}\left(\frac{k^k}{e^k k^{s+1}}\sum_{2\leq r\leq k/2}\left(\frac{r}{k}\right)^{r-3/2}\right)={\mathcal O}\left(\frac{k^k}{e^kk^{s+1}}\right)={\mathcal O}\left(\frac{k!}{k^{s+1}}\right),
\end{align*}
where from the second-last line to the last line, we used that $r\leq k-s+1$ (otherwise the sum in (\ref{claim}) is empty), and the implied constant is again absolute. Finally, for the last part in (\ref{three-parts}):
\[
\sum_{k/2<r\leq k-s+1}\frac{r!(k-r)!}{r^2(k-r)^s}={\mathcal O}\left(\frac{(k-s+1)!(s-1)!}{k^2}\sum_{r\geq s-1}\frac{1}{r^s}\right)={\mathcal O}\left(\frac{(k-s+1)!(s-1)!}{k^2(s-1)^{s-1}}\right),
\]
where we used that $r!(k-r)!$ is non-decreasing in the range $k/2<r\leq k-s+1$ and the last step follows by a standard integral approximation. Also, note that the implied constant is again absolute. Now, from the simple bound
\[
\binom{k}{s-1}\geq\left(\frac{k}{s-1}\right)^{s-1},
\]
we obtain that
\[
\frac{(k-s+1)!(s-1)!}{(s-1)^{s-1}}\leq\frac{k!}{k^{s-1}}.
\]
Thus,
\[
\sum_{k/2<r\leq k-s+1}\frac{r!(k-r)!}{r^2(k-r)^s}={\mathcal O}\left(\frac{k!}{k^{s+1}}\right)
\]
with the implied constant absolute.

Consequently, all three parts in (\ref{three-parts}) admit the same upper bound. Applying them to (\ref{conv-sum}) gives
\[
\sum_{r=1}^{k-s+1}\frac{b_rb_{k-r}}{(k-r)^{s-2}}={\mathcal O}\left(\frac{8^kk!}{k^2k^{s-1}}\right)={\mathcal O}\left(\frac{b_k}{k^{s-1}}\right),
\]
where the last step follows from (\ref{asymp-bk}). Since the implied constant is absolute, we have proved (\ref{claim}) which concludes the proof of the lemma.
\end{proof}

With the help of the last lemma, we can now prove Lemma~\ref{rel-GTC-SSTC}.

\begin{proof}[Proof of Lemma~\ref{rel-GTC-SSTC}] Note that for $s=1$, the right hand side of (\ref{Ekz}) is the exponential generating function of the number of semi-simplex tree-child networks with $n$ leaves and $k$ reticulations; see Lemma~\ref{EGF-SSTC}. Thus,
\begin{equation}\label{exp-GTC-SSTC}
{\rm GTC}_{n,k}=n![z^n]E_k(z)={\rm SSTC}_{n,k}+n![z^n]R_k(z),
\end{equation}
where
\[
R_k(z):=\frac{1}{k+1}\sum_{s=2}^k\binom{k+1}{s}F_0(z)^{k+1-s}\sum_{\substack{j_1+\cdots+j_s=k\\j_i\geq1}}
\prod_{i=1}^{s}\frac{F_{j_i}(z)}{j_i!}.
\]
Plugging the expression for $F_k(z)$ from Lemma~\ref{exp-Fkz} into this gives
\[
R_k(z)=\frac{1}{k+1}\sum_{s=2}^{k}\binom{k+1}{s}(1-2z)^{-2k+s/2}T(z)^{k+1-s}\underbrace{\sum_{\substack{j_1+\cdots+j_s=k\\j_i\geq1}}
\prod_{i=1}^{s}a_{j_i}p_{j_i}(z)}_{\displaystyle =:r_{k,s}(z)},
\]
where $T(z)=1-\sqrt{1-2z}$. Note that $r_{k,s}(z)$ is a polynomial of degree $k$ with non-negative coefficients. Thus, by Corollary~\ref{tech-cor},
\begin{align}
[z^n]R_k(z)&=\frac{1}{k+1}\sum_{s=2}^{k}\binom{k+1}{s}[z^n](1-2z)^{-2k+s/2}T(z)^{k+1-s}r_{k,s}(z)\nonumber\\
&=\frac{2^n}{k+1}\sum_{s=2}^{k}\binom{k+1}{s}[z^n](1-z)^{-2k+s/2}\tilde{T}(z)^{k+1-s}r_{k,s}(z/2)\nonumber\\
&={\mathcal O}\left(2^n
\sum_{s=2}^{k}\frac{k!r_{k,s}(1/2)}{(k+1-s)!s!}
\frac{n^{2k-s/2-1}}{\Gamma(2k-s/2)}e^{-(k+1-s)\sqrt{(2k-s/2)/n}}\right),\label{est-Rkz}
\end{align}
where $\tilde{T}(z)=T(z/2)$. Now, observe that by Lemma~\ref{final-lmm}, 
\[
r_{k,s}(1/2)=\sum_{\substack{j_1+\cdots+j_s=k\\j_i\geq1}}
\prod_{i=1}^{s}a_{j_i}p_{j_i}(1/2)=\sum_{\substack{j_1+\cdots+j_s=k\\j_i\geq1}}b_{j_1}\cdots b_{j_s}\leq b_k\left(\frac{c_0}{k}\right)^{s-1}s!.
\]
Moreover,
\[
\frac{\Gamma(2k-1/2)}{\Gamma(2k-s/2)}={\mathcal O}((2k)^{(s-1)/2})
\]
and
\[
e^{-(k+1-s)\sqrt{(2k-s/2)/n}}=e^{-k\sqrt{(2k-s/2)/n}+{\mathcal O}(s)}=e^{-\sqrt{2k^3/n}+{\mathcal O}(s)}.
\]
Plugging this into (\ref{est-Rkz}) gives
\[
[z^n]R_k(z)={\mathcal O}\left(\frac{b_k}{\Gamma(2k-1/2)}2^n n^{2k-3/2}e^{-\sqrt{2k^3/n}}\sum_{s=2}^k\frac{k!}{(k+1-s)!k^{s-1}}\left(\tilde{c}_0\frac{k}{n}\right)^{(s-1)/2}\right),
\]
where $\tilde{c}_0$ is a suitable constant. Note that
\[
\frac{b_k}{\Gamma(2k-1/2)}={\mathcal O}\left(\frac{2^k}{k!}\right)
\]
which follows by the same line of arguments as in the end of the proof of Proposition~\ref{result-SSTC}. (The above constant is up to the term $\sqrt{2\pi}$ equal to $c_k$.) Also,
\[
\sum_{s=2}^{k}\frac{k!}{(k+1-s)!k^{s-1}}\left(\tilde{c}_0\frac{k}{n}\right)^{(s-1)/2}\leq\sum_{s=2}^{k}\left(\tilde{c}_0\frac{k}{n}\right)^{(s-1)/2}={\mathcal O}\left(\sqrt{\frac{k}{n}}\right).
\]
Thus,
\[
[z^n]R_k(z)={\mathcal O}\left(\frac{2^k}{k!}2^n n^{2k-3/2}e^{-\sqrt{2k^3/n}}\sqrt{\frac{k}{n}}\right)
\]
and consequently, by Stirling's formula,
\[
n![z^n]R_k(z)={\mathcal O}\left(\frac{2^k}{k!}\left(\frac{2}{e}\right)^n n^{n+2k-1}e^{-\sqrt{2k^3/n}}\sqrt{\frac{k}{n}}\right).
\]
Plugging this into (\ref{exp-GTC-SSTC})  completes the proof of the lemma.
\end{proof}

Combining Lemma~\ref{rel-GTC-SSTC} with Proposition~\ref{result-SSTC}, we obtain the following consequence.
\begin{cor}\label{asymp-GTC}
As $n\rightarrow\infty$ and $k=o(n^{1/2})$,
\[
{\rm GTC}_{n,k}\sim\frac{2^{k-1}\sqrt{2}}{k!}\left(\frac{2}{e}\right)^n n^{n+2k-1}e^{-\sqrt{2k^3/n}}.
\]
\end{cor}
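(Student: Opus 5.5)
The corollary follows by chaining two results already proved. Proposition~\ref{result-SSTC}, in the stronger form obtained in its proof, gives
\[
{\rm SSTC}_{n,k}=\frac{2^{k-1}\sqrt{2}}{k!}\left(\frac{2}{e}\right)^nn^{n+2k-1}e^{-\sqrt{2k^3/n}}\left(1+{\mathcal O}\left(\sqrt{\frac{k}{n}}+\frac{k^2}{n}\right)\right),
\]
uniformly for $k=o(n^{1/2})$. Lemma~\ref{rel-GTC-SSTC} gives ${\rm GTC}_{n,k}={\rm SSTC}_{n,k}\bigl(1+{\mathcal O}(\sqrt{k/n})\bigr)$, uniformly in the same range. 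I would simply substitute the first expansion into the second.

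Multiplying the two factors of the form $1+{\mathcal O}(\cdot)$ gives one factor $1+{\mathcal O}\bigl(\sqrt{k/n}+k^2/n\bigr)$. Both $\sqrt{k/n}$ and $k^2/n$ tend to $0$ when $k=o(n^{1/2})$. The error factor therefore tends to $1$, which is exactly the claimed relation $\sim$.

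The only point needing care is uniformity. Both cited results hold uniformly in $k$ over the range $k=o(n^{1/2})$, with implied constants that do not depend on $k$. Hence the combined error term is uniform as well, and the asymptotic equivalence holds along any sequence $k=k(n)$ with $k=o(n^{1/2})$. There is no genuine obstacle here; the substantive work was already carried out in Lemma~\ref{rel-GTC-SSTC}, which rests on Lemma~\ref{final-lmm}.
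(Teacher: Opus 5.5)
Your proposal is correct and matches the paper, which likewise obtains the corollary by combining Lemma~\ref{rel-GTC-SSTC} with Proposition~\ref{result-SSTC}. Multiplying the two error factors, each $1+o(1)$ when $k=o(n^{1/2})$, gives the claimed equivalence, exactly as you describe.
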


We can now deduce from this our second main result.

\begin{proof}[Proof of Theorem~\ref{result-GN}] Denote by ${\rm PN}_{n,k}$ the number of (general) phylogenetic networks with $n$ leaves and $k$ reticulations. It was proved in \cite{YuZh} that, as $n\rightarrow\infty$
\begin{equation}\label{asymp-PN}
{\rm PN}_{n,k}\sim\frac{2^{k-1}\sqrt{2}}{k!}\left(\frac{2}{e}\right)^n n^{n+2k-1}
\end{equation}
uniformly for $k=o(n^{1/2})$. Now observe that a network which is a galled network but not a galled tree-child network is contained in the set of phylogenetic networks which are not tree-child networks. Thus,
\[
0\leq {\rm GN}_{n,k}-{\rm GTC}_{n,k}\leq {\rm PN}_{n,k}-{\rm TC}_{n,k}.
\]
Using (\ref{asymp-k-small}) and (\ref{asymp-PN}) yields
\[
{\rm GN}_{n,k}-{\rm GTC}_{n,k}=o\left(\frac{2^k}{k!}\left(\frac{2}{e}\right)^nn^{n+2k-1}\right)
\]
and consequently, from Corollary~\ref{asymp-GTC},
\[
{\rm GN}_{n,k}=\frac{2^{k-1}\sqrt{2}}{k!}\left(\frac{2}{e}\right)^nn^{n+2k-1}\left(e^{-\sqrt{2k^3/n}}+o(1)\right).
\]
This proves the theorem.
\end{proof}

\section{Conclusion}\label{con}

In a series of papers that started in \cite{FuGiMa1} and was continued in \cite{ChFu,ChFuYu,FuGiMa2,FuHuYu,Ma,YuFuYuZh}, it was shown that the numbers of general networks (${\rm PN}_{n,k}$), tree-child networks (${\rm TC}_{n,k}$), galled networks (${\rm GN}_{n,k}$), and galled tree-child networks (${\rm GTC}_{n,k}$) with $n$ leaves and $k$ reticulations all admit the same asymptotic formula for fixed $k$ as $n$ tends to infinity, namely,
\[
{\rm PN}_{n,k}\sim{\rm TC}_{n,k}\sim{\rm GN}_{n,k}\sim{\rm GTC}_{n,k}\sim\frac{2^{k-1}\sqrt{2}}{k!}\left(\frac{2}{e}\right)^n n^{n+2k-1}.
\]
In fact, the results for tree-child and galled networks follow from those for general networks and galled tree-child networks, respectively, as the former class is a superclass of both, whereas the latter class is contained in both. On the other hand, the classes of tree-child networks and galled networks have a non-trivial intersection. Recently, the results for general networks and tree-child networks were shown to remain valid in the range $k=o(n^{1/2})$, but not beyond; see \cite{YuZh}. This raised the question whether the same is true for the other two classes, too. In this paper, we surprisingly proved that the square-root threshold for general and tree-child networks becomes a cube-root threshold for galled networks and galled tree-child networks.

We proved the above results by first deriving the asymptotic formula for the number of semi-simplex tree-child networks and then extending it to galled tree-child networks and finally to galled networks. Our proof is elementary in the sense that no complex-analytic tools are used. However, we could have alternatively used tools from analytic combinatorics (which are based on complex analysis). We avoided this in order to make the paper also accessible to readers who are not familiar with these tools.

Another result which we proved in this paper, and which in fact is the source of the cube-root phenomenon, is a (sharp) phase transition result for random tree-child networks. More precisely, we proved that for $k=o(n^{1/3})$, a tree-child network with $n$ leaves and $k$ reticulations chosen uniformly at random is almost surely semi-simplex. On the other hand, for $k/n^{1/3}\rightarrow\infty$ and $k=o(n^{1/2})$, the opposite holds, namely, it almost surely is not. Moreover, in the transitional range when $k\sim cn^{1/3}$, we found the limiting probability of the event that a random tree-child network is semi-simplex. To the best of our knowledge, this is the first phase transition result for random phylogenetic networks.

\section*{Acknowledgements} We thank Louxin Zhang for encouragement and helpful comments. MF acknowledges partial support by the National Science and Technology Council (NSTC), Taiwan under grants NSTC-113-2115-M-004-004-MY3. HY is financially supported by the Singapore MOE Academic Research Fund Tier 1 [A-8001951-00-00].

\end{document}